\documentclass[a4paper,11pt]{amsart}
\usepackage[T1]{fontenc}
\usepackage[english]{babel}
\usepackage[cp1252]{inputenc}
\usepackage{amsthm}
\usepackage{amsmath}
\usepackage{amsfonts}

\usepackage{xcolor,graphicx,float,hyperref,booktabs,array,listings}
\usepackage{tikz}
\hypersetup{colorlinks=true,linkcolor=blue,citecolor=blue,urlcolor=blue}
\definecolor{skyblue}{RGB}{113,193,225}
\definecolor{darkred}{RGB}{125,0,30}
\lstdefinestyle{exactpython}{language=Python,basicstyle=\ttfamily\scriptsize,
breaklines=true,breakatwhitespace=false,columns=fullflexible,showstringspaces=false,
frame=single,numbers=left,numberstyle=\tiny,xleftmargin=1.6em,framexleftmargin=1.2em}

\usepackage{tikz-cd}
\usepackage{pgfplots}
 \usepackage{microtype}
\usepackage[margin=0.8in]{geometry}

\usepackage{xcolor}
 \pgfplotsset{compat=1.18}
\usepgfplotslibrary{fillbetween}

\usepackage{amssymb}
\usepackage{hyperref}
\usepackage{color}
\usepackage{caption}
\usepackage{indentfirst}
\usepackage{amssymb}
\usepackage{eufrak}
\usepackage{mathrsfs}
\usepackage{xypic}

 \usepackage{booktabs,array,graphicx,longtable}
\usepackage{array,booktabs,tabularx}

\usepackage{booktabs}
\usepackage{listings}
\theoremstyle{plain}   

\usepackage{booktabs}
\usepackage{graphicx}
\usepackage{array}

\definecolor{amoebadarkblue}{RGB}{0,63,127}
\definecolor{contourred}{RGB}{224,0,0}

\newif\ifhighdensityplot
\highdensityplotfalse

\definecolor{ContourColor}{RGB}{100,0,10}

\newcommand{\R}{\mathbb R}
\newcommand{\C}{\mathbb C}

\newcommand{\Z}{\mathbb Z}

\newcommand{\Log}{\mathrm{Log}\,}
\newcommand{\A}{\mathscr A}

\newcommand{\Cont}{\mathcal C}

\newcommand{\Crit}{\operatorname{Crit}}

\newcommand{\Area}{\operatorname{Area}}

\newcommand{\Newt}{\operatorname{Newt}}

\usepackage{array}
\usepackage{tabularx}

 \usepackage{caption}
 
\newcommand\restr[2]{{
  \left.\kern-\nulldelimiterspace 
  #1 
  \right|_{#2} 
  }}

\newtheorem{remark}{Remark}[section]
\newtheorem*{mtheorem*}{Main Theorem}
\newtheorem{theorem}{Theorem}[section]

\newtheorem{proposition}{Proposition}[section]

\newtheorem{lemma}{Lemma}[section]

\begin{document}
\title{Beyond Harnack Amoeba Boundaries}
\author{Mounir Nisse}
\date{}

\address{Mounir Nisse\\
Department of Mathematics, Xiamen University Malaysia, Jalan Sunsuria, Bandar Sunsuria, 43900, Sepang, Selangor, Malaysia.
}

 \email{\href{mailto:mounir.nisse@gmail.com}{mounir.nisse@gmail.com}, {mounir.nisse@xmu.edu.my}}

 \urladdr{www.math.tamu.edu/\~{}nisse}
 
\thanks{This research is supported in part by Xiamen University Malaysia Research Fund (Grant no. XMUMRF/ 2024-C5/IMAT/0013).}

\subjclass[2020]{14T20, 14H50, 14M25, 14P05}
 
\keywords{Amoeba, amoeba contour, logarithmic critical locus, simple Harnack curve, toric curve, contour--boundary equality, monomial covering}

\maketitle
 
\begin{abstract}
We construct smooth irreducible plane curves whose complete amoeba contour is a smooth embedded curve equal to the amoeba boundary, although the curves are not torically equivalent to simple Harnack curves.  Finite monomial coverings yield explicit infinite families with higher logarithmic multiplicity and nonmaximal normalized amoeba area.  We classify the Newton polygons admitting such covering constructions through their exponent lattices, give an exact connectedness criterion in terms of monodromy, and treat primitive polygons by a separate phase-sensitive construction.  Exact triangular and trapezoidal families are complemented by positive-genus investigations exhibiting bounded amoeba-complement components.  These results show that a perfectly exposed amoeba contour does not determine the real structure, logarithmic sheet number, or Harnack type of the underlying curve.
\end{abstract}

\section{Introduction}

Amoebas compress complex algebraic geometry into real geometry without erasing the combinatorics at infinity.  If $C=V(f)\subset(\C^*)^2$ is a plane algebraic curve, its amoeba is the closed subset $\A_f=\Log(C)\subset\R^2$, where $\Log(z,w)=(\log|z|,\log|w|)$.  The Newton polygon $\Newt(f)$ controls the asymptotic directions of the amoeba, while the convex components of $\R^2\setminus\A_f$ are detected by the order map and indexed by lattice points of $\Newt(f)$ \cite{FPT,GKZ,MikhalkinSurvey}.  This interaction between complex geometry, convexity, and lattice combinatorics has made amoebas a natural meeting point of toric geometry, real algebraic geometry, and tropical geometry.

The boundary of an amoeba carries especially delicate information.  Let $S_f=\operatorname{Crit}(\Log|_C)$ be the logarithmic critical locus and let $\Cont(\A_f)=\Log(S_f)$ be the contour.  At a smooth point of $C$, logarithmic criticality is detected by the logarithmic Gauss map $\gamma_f:C\to\mathbb P^1(\C)$, defined by $\gamma_f(z,w)=[zf_z:wf_w]$: one has $S_f=\gamma_f^{-1}(\mathbb P^1(\R))$.  The topological boundary $\partial\A_f$ is contained in the contour, but the containment is often strict.  Critical branches may lie inside the amoeba, distinct critical points may have the same logarithmic image, and the contour may develop crossings, tangencies, cusps, or higher-order contacts even when the algebraic curve itself is smooth.  Lang, Shapiro, and Shustin placed this critical-value geometry at the center of their study of the real degree of amoeba contours and emphasized the need for effective hypotheses guaranteeing regularity \cite{LSS2021}.  The analytic distinction between the smoothness of $S_f$ and the smoothness of its image is developed further in \cite{NisseAnalyticity}.

Simple Harnack curves provide the classical model in which these structures align.  Their real loci occupy an extremal position in the associated toric surface, their logarithmic map is at most two-to-one, and their complete contour coincides with the amoeba boundary \cite{Mikhalkin2000}.  The same extremality is visible through area: for a plane curve with Newton polygon $\Delta$, the amoeba satisfies $\operatorname{Area}(\A_f)\leq\pi^2\operatorname{Area}(\Delta)$, and equality is characterized by Harnack geometry up to the appropriate real normalization \cite{MR2001}.  The Ronkin function and its Monge--Amp\`ere measure connect this area theory to subdivisions and triangulations of the Newton polygon \cite{PassareRullgard}.  These results make it natural to ask whether the apparently rigid identity $\Cont(\A_f)=\partial\A_f$, strengthened by smoothness of the contour, already forces the curve to be simple Harnack.

The purpose of this paper is to show that it does not.  The contour records the location of logarithmic critical values, but it does not retain the full organization of the logarithmic sheets above the amoeba.  It also forgets phase data that distinguish a real curve from a curve with no compatible toric real form.  Consequently, two curves may have equally regular boundary-only contours while having different generic fiber multiplicities, different normalized amoeba areas, and different toric-equivalence classes.  The examples constructed here make this loss of information explicit and measurable.

The first mechanism is transport by a finite monomial covering.  For an integral matrix $A\in\operatorname{Mat}_{2\times2}(\Z)$ with $\det A\neq0$, the map $\pi_A:(\C^*)^2\to(\C^*)^2$ defined by monomials is an unramified covering of degree $|\det A|$.  Its decisive feature is the exact identity $\Log\circ\pi_A=A\circ\Log$.  Thus inverse images of amoebas, their boundaries, logarithmic critical loci, and contours are transported by the same invertible real linear map.  When the inverse image of a smooth simple Harnack curve is connected, it is a smooth irreducible curve whose contour still equals its amoeba boundary.  Nevertheless, a generic interior logarithmic fiber now contains $2|\det A|$ points instead of two, and its normalized area ratio is divided by $|\det A|^2$.  Since logarithmic fiber multiplicity and normalized area are invariant under toric automorphisms, the pullback cannot be torically equivalent to a simple Harnack curve.  This gives a transparent geometric reason why boundary equality alone cannot recover Harnack rigidity.

The construction leads immediately to two arithmetic questions.  One must decide whether a prescribed Newton polygon is a nonunimodular integral image of a smaller lattice polygon, and one must determine whether the inverse image of the base curve is connected.  Both questions are resolved here by exact lattice calculations.  If $\Delta$ has vertices $v_0,\ldots,v_s$, the subgroup generated by the differences $v_j-v_0$ has finite index $\kappa(\Delta)$ in $\Z^2$.  Smith normal form shows that $\kappa(\Delta)$ is the greatest common divisor of the nonzero $2\times2$ determinants formed by the vertex differences; the necessary integral-matrix computations are standard consequences of the theory developed by Newman and Cohen \cite{Newman,Cohen}.  The polygon admits a nontrivial lattice quotient exactly when $\kappa(\Delta)>1$, and all quotient polygons are obtained from intermediate lattices, which may be enumerated uniquely by Hermite normal forms.  This translates a geometric covering problem into a finite and executable arithmetic classification.

Connectedness requires more than the Newton polygon.  If $H\subset(\C^*)^2$ is connected, the inclusion induces a monodromy lattice $\Lambda_H\subset\Z^2$.  Covering-space theory identifies the connected components of $\pi_A^{-1}(H)$ with the monodromy orbits in $\Z^2/A\Z^2$ \cite{Hatcher}.  The pullback is connected precisely when $\Lambda_H+A\Z^2=\Z^2$, and its exact number of components is the index $[\Z^2:\Lambda_H+A\Z^2]$.  Toric boundary loops provide computable elements of $\Lambda_H$: for a Newton-nondegenerate curve, the primitive outward normals of the polygon sides arise as winding vectors around the toric ends \cite{CoxLittleSchenck}.  Their maximal minors give a fast sufficient connectedness test.  When that test is inconclusive, compact cycles must be included, and the full monodromy lattice supplies the definitive answer.  This separation between polygon arithmetic, end monodromy, and compact-cycle monodromy is essential for avoiding false irreducibility claims.

The general covering theorem is accompanied by elementary families in which every geometric feature can be seen directly.  The curves $1+z^n+w=0$ are connected degree-$n$ pullbacks of a line.  Their amoebas admit an exact description by triangle inequalities, their contours are smooth and equal to their boundaries, and their generic logarithmic fibers contain $2n$ points.  A richer graph family is defined by $(z^m-a)w-(z^m+b)(z^m+c)=0$, with $a,b,c>0$ and $b\neq c$.  On every circle in the base variable, direct angular differentiation gives exactly one maximum and one minimum of the second logarithmic coordinate.  These extrema form the complete boundary, and the monomial pullback produces an infinite family of irreducible non-Harnack curves with boundary-only contours.  The calculations illustrate in their simplest form the principle that the contour may remain geometrically perfect while the logarithmic map acquires many additional sheets.

Finite coverings do not explain every example.  A polygon with $\kappa(\Delta)=1$ is monomially primitive and cannot be obtained as a nonunimodular integral image of a smaller lattice polygon.  The primitive trapezoid studied here therefore requires a different construction.  For the family $f_{p,q}(z,w)=1+z+pz^2+w+qzw$, a Gaussian-rational parameter box is treated by exact elimination and outward-rounded rational interval arithmetic.  The equation is linear in $w$, so the logarithmic map reduces to the angular behavior of the rational function $-(1+z+pz^2)/(1+qz)$.  A uniform harmonic-dominance estimate proves that every circle $|z|=r$ has exactly two simple angular critical points, one maximum and one minimum.  This produces a smooth critical locus, a smooth embedded contour, and the exact identity $\Cont(\A_{f_{p,q}})=\partial\A_{f_{p,q}}$ throughout the parameter box.  Independent coefficient phases obstruct every toric real normalization, so none of these curves is torically equivalent to a simple Harnack curve.  This primitive result strengthens the covering examples and confirms that the failure of contour--boundary rigidity is not an artifact of a hidden quotient lattice \cite{NisseRigidity}.

The paper also examines positive-genus curves whose Newton polygons contain interior lattice points.  Khovanskii's formula identifies the genus of a smooth Newton-nondegenerate compactification with the number of interior lattice points of its Newton polygon \cite{Kho78}.  In the explicit genus-one and genus-three investigations, discriminants of quadratic presentations certify irreducibility and smoothness, while lopsidedness and the order map locate the bounded complement components.  Affine relations among support exponents produce phase products invariant under scalar multiplication, torus translations, and unimodular monomial changes.  Nonreal values of these invariants exclude toric equivalence to a real curve and hence to a simple Harnack curve.  The genus-three example has three bounded complement components, one for each interior lattice point of its pentagonal Newton polygon.  Its computed contour follows the boundary through extremely narrow amoeba strips, illustrating both the strength and the limitation of numerical contour evidence.  These later examples are presented with an explicit distinction between exact algebraic certification and numerical verification.

Several conceptual conclusions emerge.  The identity between contour and boundary is an assertion about the image of the critical locus, not about the number of critical lifts producing that image.  Smoothness of the contour does not imply injectivity of the critical parametrization, because several source branches may cover the same smooth image germ.  Nonreal coefficients do not themselves exclude Harnack equivalence, since a unitary torus translation can make a real equation appear nonreal without changing its amoeba.  Conversely, lattice-index, monodromy, fiber-multiplicity, area, and phase invariants survive the relevant toric operations and therefore provide effective obstructions.  The correct rigidity problem must take these hidden data into account.

From this viewpoint, the relation with the challenge of Lang, Shapiro, and Shustin is especially clear.  Their work asks for effective control of contour regularity and for quantitative relations between the topology of the amoeba boundary and the singularity structure of the remaining contour \cite{LSS2021}.  The constructions in this paper occupy the extremal regime in which the remaining internal contour is empty.  They show that even this strongest set-theoretic simplification does not identify the algebro-geometric nature of the curve.  Boundary topology, critical-image regularity, logarithmic multiplicity, normalized area, exponent-lattice structure, monodromy, and coefficient phases are related but distinct layers of information.

The organization of the paper follows this progression.  The opening section develops toric equivalence and transport under finite monomial coverings, proves the general pullback theorem, and gives the exact triangular family.  The next section treats the trapezoidal graph family by a direct angular argument.  The lattice-theoretic section classifies all quotient polygons, establishes the monodromy criterion for connectedness, and gives the uniform primitive-trapezoid certificate.  The final sections investigate positive-genus examples with bounded amoeba-complement components and phase obstructions to Harnack equivalence.  Together, these results replace the proposed contour--boundary rigidity by a more precise principle: an amoeba boundary can expose every logarithmic critical value while concealing both the multiplicity and the phase geometry of the curve that produces it.

\medskip

\noindent{\it Acknowledgements.} The author is deeply grateful to Boris Shapiro for kindly sharing the joint paper with Lionel Lang and Eugenii Shustin \cite{LSS2021}; the present work was inspired by the challenging question raised in that paper.
Also, I am gratefully acknowledges the use of Codex AI, based on GPT-6, which used 
 Python, NumPy, SciPy, Matplotlib, Pillow, and LaTeX with TikZ and PGFPlots for the exact computations, numerical experiments, and figures presented in this work.  


\section{Amoebas, contours, and toric equivalence}

For a Laurent polynomial $f\in\C[z^{\pm1},w^{\pm1}]$, assume that $C=V(f)$ is smooth in $(\C^*)^2$.  The logarithmic critical locus is $S_f=\Crit(\Log|_C)$.  It is described by the logarithmic Gauss map $\gamma_f:C\to\mathbb P^1(\C)$, $\gamma_f(z,w)=[zf_z:wf_w]$: one has $S_f=\gamma_f^{-1}(\mathbb P^1(\R))$.  This criterion concerns critical points on the source curve, whereas $\Cont(\A_f)=\Log(S_f)$ is their critical-value image.  Distinct critical points can share one logarithmic value, and a contour point can lie inside $\A_f$.

A toric automorphism is a composition of a torus translation and a unimodular monomial map.  It has the form $T(z,w)=(c_1z^aw^b,c_2z^cw^d)$ with $c_1,c_2\in\C^*$ and $U=\left(\begin{smallmatrix}a&b\\c&d\end{smallmatrix}\right)\in\operatorname{GL}(2,\Z)$.  Its logarithmic action is $\Log\circ T=U\circ\Log+(\log|c_1|,\log|c_2|)$.  Consequently it preserves the cardinality of corresponding logarithmic fibers and the normalized ratio $\Area(\A_f)/(\pi^2\Area(\Newt(f)))$.

This observation also removes an ambiguity concerning nonreal coefficients.  A unitary torus translation can turn a real defining equation into a nonreal one without changing the amoeba.  The resulting curve remains torically equivalent to the original real curve.  Therefore nonreality of displayed coefficients is not, by itself, an obstruction to Harnack equivalence.  Fiber multiplicity, normalized amoeba area, or a phase invariant that survives toric automorphisms is required.

\subsection{Transport under a finite monomial covering}

Let $A=(a_{jk})\in\operatorname{Mat}_{2\times2}(\Z)$ with $d=|\det A|>0$, and define $\pi_A(z,w)=(z^{a_{11}}w^{a_{12}},z^{a_{21}}w^{a_{22}})$.  This is an unramified covering of degree $d$.  The exact identity $\Log\circ\pi_A=A\circ\Log$ will be used repeatedly.

\begin{lemma}\label{lem:transport}
Let $H\subset(\C^*)^2$ be a smooth curve and let $C=\pi_A^{-1}(H)$.  Then $C$ is smooth and reduced, possibly disconnected, and
$$
\A_C=A^{-1}(\A_H),\qquad \partial\A_C=A^{-1}(\partial\A_H),\qquad
S_C=\pi_A^{-1}(S_H),\qquad \Cont(\A_C)=A^{-1}(\Cont(\A_H)).
$$
\end{lemma}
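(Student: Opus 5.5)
The plan is to derive every assertion from two facts: $\pi_A$ is a local biholomorphism and an unramified covering, and $\Log\circ\pi_A=A\circ\Log$ with $A$ invertible over $\R$.

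\textbf{Smoothness and reducedness.} Since $\pi_A$ is surjective onto $(\C^*)^2$ and étale, the preimage $C=\pi_A^{-1}(H)$ of a smooth curve is smooth. Concretely, if $H=V(h)$ with $h$ reduced, then $C=V(h\circ\pi_A)$. The Jacobian of $\pi_A$ at $(z,w)$ has determinant $\det A\cdot\pi_A(z,w)_1\pi_A(z,w)_2/(zw)\neq0$. Hence the chain rule shows that $d(h\circ\pi_A)$ vanishes on $C$ only where $dh$ vanishes on $H$, which never happens. Therefore $h\circ\pi_A$ defines a smooth, hence reduced, hypersurface. Connectedness is not claimed.

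\textbf{Amoeba and boundary.} Surjectivity of $\pi_A$ onto $H$ gives
$$\A_C=\Log(C)=\{x:\ Ax\in\Log(\pi_A(C))\}\cap\ldots$$
More cleanly, I would argue by double inclusion. First, $x\in\A_C$ implies $Ax=\Log\pi_A(p)\in\A_H$. Conversely, suppose $Ax\in\A_H$, say $Ax=\Log(q)$ with $q\in H$. Pick any $p_0$ with $\Log p_0=x$. Then $\pi_A(p_0)$ and $q$ have the same $\Log$, so they differ by a unitary torus element $u$. The restriction $\pi_A:(S^1)^2\to(S^1)^2$ is surjective (it is induced by $A^T$ on $\R^2/\Z^2$, and $\det A\neq0$), so $u=\pi_A(v)$ with $|v|=1$. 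Then $p=vp_0\in C$ and $\Log p=x$. This gives $\A_C=A^{-1}(\A_H)$. Since $A$ is a homeomorphism of $\R^2$, it maps boundaries to boundaries, which gives the second identity.

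\textbf{Critical locus and contour.} Because $\pi_A|_C:C\to H$ is a local biholomorphism and $\Log|_C=A^{-1}\circ\Log|_H\circ\pi_A|_C$, the real differentials satisfy $d(\Log|_C)_p=A^{-1}\circ d(\Log|_H)_{\pi_A(p)}\circ d(\pi_A|_C)_p$. The outer factors are isomorphisms, so the ranks agree. Thus $p\in S_C$ if and only if $\pi_A(p)\in S_H$, i.e.\ $S_C=\pi_A^{-1}(S_H)$. Equivalently, one can check that the logarithmic Gauss maps satisfy $\gamma_C=A^{T}\!\cdot\gamma_H\circ\pi_A$ under the real projective transformation induced by $A^T$, which preserves $\mathbb P^1(\R)$. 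Finally,
$$\Cont(\A_C)=\Log(\pi_A^{-1}(S_H))=A^{-1}\Log(S_H).$$
Here the inclusion $\supseteq$ requires the same lifting argument as above, applied with $S_H$ in place of $H$. This gives the last identity.

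The only nonformal step is the fiberwise surjectivity used in the lifting: every point of $\Log^{-1}(Ax)$ is hit by $\pi_A$ from $\Log^{-1}(x)$. I expect this to be the main point to state carefully, and it rests on the surjectivity of $A^T$ on the real torus $\R^2/\Z^2$.
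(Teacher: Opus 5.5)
Your proposal is correct and follows the paper's proof essentially step by step: étaleness for smoothness and reducedness, the identity $\Log\circ\pi_A=A\circ\Log$ together with surjectivity of $\pi_A$ on the argument torus for the amoeba (with $A$ a homeomorphism of $\R^2$ for the boundary), and the chain rule with invertible outer factors for the critical locus, followed by applying $\Log$ for the contour. One small slip: with the convention $\Log\circ\pi_A=A\circ\Log$, the map on $\R^2/\Z^2$ is induced by $A$ rather than $A^{\mathsf T}$ (the transpose is what acts on the logarithmic Gauss map), but this is harmless because surjectivity only uses $\det A\neq0$.
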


\begin{proof}
The covering is \emph{\'etale}, so the inverse image of the smooth reduced divisor $H$ is smooth and reduced.  If $p\in C$, then $\Log(\pi_A(p))=A\Log(p)$, which gives $\A_C\subseteq A^{-1}(\A_H)$.  Conversely, choose $x\in A^{-1}(\A_H)$ and $q\in H$ with $\Log(q)=Ax$.  The induced map on argument tori is a surjective finite covering.  Hence $q$ has a lift $p$ whose moduli are prescribed by $x$, so $\Log(p)=x$.  This proves the amoeba identity.  Since $A$ is a homeomorphism of $\R^2$, it commutes with taking topological boundaries.

Differentiation along $C$ gives $d(\Log|_H)_{\pi_A(p)}\circ d(\pi_A|_C)_p=A\circ d(\Log|_C)_p$.  Both $d\pi_A$ and $A$ are invertible.  Thus $d(\Log|_C)$ loses rank at $p$ if and only if $d(\Log|_H)$ loses rank at $\pi_A(p)$.  This proves the critical-locus identity, and applying $\Log$ proves the contour identity.
\end{proof}

\begin{proposition}\label{prop:fiber-area}
For corresponding regular values $x$ and $Ax$, one has $\#(C\cap\Log^{-1}(x))=d\,\#(H\cap\Log^{-1}(Ax))$.  Moreover, $\Area(\A_C)=\Area(\A_H)/d$, while $\Area(\Newt(C))=d\Area(\Newt(H))$.
\end{proposition}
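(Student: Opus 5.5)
The plan is to prove the three assertions separately, each from the identity $\Log\circ\pi_A=A\circ\Log$ together with the structure of $\pi_A$ on argument tori.

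\emph{Fiber count.} Write $(z,w)=(e^{x_1+i\theta_1},e^{x_2+i\theta_2})$. Then $\pi_A$ acts on moduli by $x\mapsto Ax$ and on arguments by the torus endomorphism $\theta\mapsto A\theta \bmod 2\pi\Z^2$. This endomorphism is a surjective homomorphism $(\R/2\pi\Z)^2\to(\R/2\pi\Z)^2$ whose kernel has order $d=|\det A|$. Hence $\pi_A$ restricts to a $d$-to-$1$ map
$$
\Log^{-1}(x)\longrightarrow \Log^{-1}(Ax).
$$
The fiber $C\cap\Log^{-1}(x)$ is exactly the preimage of $H\cap\Log^{-1}(Ax)$ under this map, so it is a disjoint union of $d$-point fibers over the points of $H\cap\Log^{-1}(Ax)$. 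Regularity plays a role only in ensuring both fibers are finite. By Lemma~\ref{lem:transport} ($S_C=\pi_A^{-1}(S_H)$), $x$ is a regular value for $C$ exactly when $Ax$ is one for $H$.

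\emph{Area.} By Lemma~\ref{lem:transport}, $\A_C=A^{-1}(\A_H)$. Lebesgue measure transforms under the invertible linear map $A^{-1}$ by the factor $|\det A^{-1}|=1/d$, which gives $\Area(\A_C)=\Area(\A_H)/d$.

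\emph{Newton polygon.} Let $H=V(h)$ with $h=\sum_{m}c_m z^{m_1}w^{m_2}$. Then $C=V(h\circ\pi_A)$, and
$$
h\circ\pi_A=\sum_m c_m (z,w)^{A^{T}m}.
$$
Since $A^T$ is injective, no monomials cancel, so $\Newt(h\circ\pi_A)=A^T\Newt(h)$ and its area is $|\det A^T|\Area(\Newt(H))=d\Area(\Newt(H))$.

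\emph{Main obstacle.} The one point to check is that $h\circ\pi_A$ is really a defining polynomial of $C$, and that $\Newt(C)$ means its Newton polygon. Because $\pi_A$ is étale, $h\circ\pi_A$ is reduced, which is consistent with the smoothness in Lemma~\ref{lem:transport}. If $C$ is disconnected, I would define $\Newt(C)$ as the Newton polygon of this reduced equation. The remaining steps are pure linear algebra.
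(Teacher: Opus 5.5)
Your proposal is correct and follows the same route as the paper. The fiber count comes from the $d$ lifts through the order-$d$ kernel of the argument-torus endomorphism, all of which lie over the same logarithmic value. The amoeba area follows from $\A_C=A^{-1}(\A_H)$ and the Jacobian factor $1/d$. The Newton polygon area follows because exponents transform by $A^{\mathsf T}$ under $h\circ\pi_A$.

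Your additional checks are details the paper leaves implicit: that regular values correspond via $S_C=\pi_A^{-1}(S_H)$, and that injectivity of $A^{\mathsf T}$ prevents monomial cancellation.
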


\begin{proof}
Every point of $H\cap\Log^{-1}(Ax)$ has exactly $d$ lifts under $\pi_A$.  The kernel of the covering is contained in $(S^1)^2$, so all lifts have logarithmic value $x$.  This proves the fiber formula.  The linear map $A$ has Jacobian determinant of absolute value $d$, and Lemma~\ref{lem:transport} gives the amoeba-area formula.  Exponents transform by $A^{\mathsf T}$, so $\Newt(C)=A^{\mathsf T}\Newt(H)$ up to a lattice translation; its area is multiplied by $d$ \cite{GKZ,CoxLittleSchenck}.
\end{proof}

\begin{theorem}\label{thm:general-cover}
Let $H$ be a smooth Newton-nondegenerate simple Harnack curve with two-dimensional Newton polygon $\Delta$, and assume that its contour is a smooth embedded real-analytic curve.  Let $A$ be an integral matrix with $d=|\det A|>1$.  If $C=\pi_A^{-1}(H)$ is connected, then $C$ is smooth, irreducible, and Newton nondegenerate, its contour is smooth and embedded, and $\Cont(\A_C)=\partial\A_C$.  The curve $C$ is not torically equivalent to a simple Harnack curve.  Its generic interior logarithmic fiber has $2d$ points and
$$
\frac{\Area(\A_C)}{\pi^2\Area(\Newt(C))}=\frac1{d^2}.
$$
\end{theorem}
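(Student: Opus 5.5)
The plan is to read off almost everything from Lemma~\ref{lem:transport} and Proposition~\ref{prop:fiber-area}, combined with the classical properties of simple Harnack curves recalled in the introduction.

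\emph{Regularity and irreducibility.} By Lemma~\ref{lem:transport}, $C$ is smooth and reduced. A smooth curve in $(\C^*)^2$ that is connected is irreducible, since two distinct components would have to meet, producing a singular point.

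\emph{Newton nondegeneracy.} Write $H=V(h)$. Then $C=V(h\circ\pi_A)$, and the exponents of $h\circ\pi_A$ are $A^{\mathsf T}$ applied to the exponents of $h$. For a side $\Gamma$ of $\Newt(C)=A^{\mathsf T}\Delta$, the truncation $(h\circ\pi_A)^{\Gamma}$ equals $h^{\Gamma'}\circ\pi_A$, where $\Gamma'$ is the corresponding side of $\Delta$. Since $\pi_A$ is étale, the truncation $h^{\Gamma'}\circ\pi_A$ has no singular zeros in the torus because $h^{\Gamma'}$ has none. Hence $C$ is Newton nondegenerate.

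\emph{Contour.} By Mikhalkin's theorem, $\Cont(\A_H)=\partial\A_H$ for the simple Harnack curve $H$. Lemma~\ref{lem:transport} transports this to
\[
\Cont(\A_C)=A^{-1}(\Cont(\A_H))=A^{-1}(\partial\A_H)=\partial\A_C .
\]
Because $A^{-1}$ is a real-linear isomorphism, the image of a smooth embedded real-analytic curve under it is again smooth and embedded.

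\emph{Fibers and area.} For simple Harnack curves, $\Log|_H$ is at most two-to-one, and it is exactly two-to-one over the interior of $\A_H$ away from $\Cont(\A_H)=\partial\A_H$. Indeed, the real points map to the boundary, and conjugate pairs map to the interior. Proposition~\ref{prop:fiber-area} then gives $2d$ points over a generic interior point of $\A_C$, since $A$ maps interiors onto interiors.

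For the area, Mikhalkin--Rullg\aa rd give $\Area(\A_H)=\pi^2\Area(\Delta)$ for a simple Harnack curve. Proposition~\ref{prop:fiber-area} then yields
\[
\Area(\A_C)=\frac{\pi^2\Area(\Delta)}{d},\qquad \Area(\Newt C)=d\Area(\Delta),
\]
so the normalized ratio is $1/d^2$.

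\emph{Non-equivalence.} A toric automorphism preserves logarithmic fiber cardinalities and the normalized area ratio, as observed above. Every simple Harnack curve has ratio exactly $1$ and generic interior fibers of size $2$. Since $d>1$, we have $1/d^2\neq1$ (and $2d\neq2$), so $C$ cannot be torically equivalent to a simple Harnack curve.

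\emph{Main obstacle.} The step requiring the most care is the generic fiber count for $H$: I must make sure that "exactly two preimages" holds off the contour. I would argue by the degree of the map $\Log|_H$ away from the critical values, using the fact that the real part of $H$ is the full critical locus $S_H$ and that complex conjugation acts freely on the non-real points. The remaining steps are direct citations.
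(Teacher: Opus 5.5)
Your proposal is correct and follows the paper's proof essentially step for step. Smoothness and irreducibility come from the \'etale transport of Lemma~\ref{lem:transport} plus connectedness, and Newton nondegeneracy comes from pulling back face truncations. The identity $\Cont(\A_C)=\partial\A_C$ comes from transporting Mikhalkin's identity through $A^{-1}$. The fiber count $2d$, the ratio $1/d^2$, and non-equivalence come from Proposition~\ref{prop:fiber-area} together with the toric invariance of those two quantities. The paper simply cites the fact that interior fibers of $H$ have exactly two points; your justification of it (conjugate non-real pairs, since the real part maps onto $\partial\A_H$) is a welcome extra detail.
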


\begin{proof}
Smoothness follows from Lemma~\ref{lem:transport}.  A connected smooth algebraic curve is irreducible because distinct irreducible components of a smooth curve are disjoint.  Newton nondegeneracy is preserved: every face truncation of the pullback is the pullback of a face truncation of the base polynomial, and its logarithmic derivative vector is transformed by the invertible real matrix $A$.

For a simple Harnack curve, the complete contour equals the amoeba boundary and a generic interior logarithmic fiber has two points \cite{Mikhalkin2000,MR2001}.  Lemma~\ref{lem:transport} gives $\Cont(\A_C)=A^{-1}(\Cont(\A_H))=A^{-1}(\partial\A_H)=\partial\A_C$.  The same real linear diffeomorphism preserves smoothness and embeddedness of the image.  Proposition~\ref{prop:fiber-area} gives the multiplicity $2d$.  It also gives $\Area(\A_C)=\pi^2\Area(\Delta)/d$ and $\Area(\Newt(C))=d\Area(\Delta)$, yielding the displayed ratio.  A simple Harnack curve has generic multiplicity two and normalized area ratio one.  Since both quantities are invariant under toric equivalence, $C$ cannot be torically equivalent to one.
\end{proof}

Since smooth simple Harnack curves exist for every two-dimensional lattice polygon through toric patchworking \cite{Mikhalkin2000}, Theorem~\ref{thm:general-cover} produces examples over infinitely many integral images $A^{\mathsf T}\Delta$.  Connectedness cannot be omitted, and it will be treated exactly in Section~\ref{sec:connectedness}.

\subsection{The exact triangular family}

For every integer $n\geq2$, let $f_n(z,w)=1+z^n+w$.  Its Newton polygon is $\Delta_n=\operatorname{conv}\{(0,0),(n,0),(0,1)\}$.

\begin{theorem}\label{thm:triangle-family}
The curve $C_n=V(f_n)$ is smooth, connected, irreducible, and Newton nondegenerate.  Its complete finite contour is smooth and embedded, $\Cont(\A_{f_n})=\partial\A_{f_n}$, a generic interior logarithmic fiber has $2n$ points, and
$$
\Area(\A_{f_n})=\frac{\pi^2}{2n},\qquad
\frac{\Area(\A_{f_n})}{\pi^2\Area(\Delta_n)}=\frac1{n^2}.
$$
In particular, $C_n$ is not torically equivalent to a simple Harnack curve.
\end{theorem}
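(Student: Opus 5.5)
The plan is to present $C_n$ as a monomial pullback of a line and apply Theorem~\ref{thm:general-cover}. Let $H=V(1+z+w)$. This is a smooth Newton-nondegenerate simple Harnack curve: it is a real line meeting the three toric boundary divisors of $\mathbb P^2$ in the cyclic order demanded by Harnack's condition, and its Newton polygon is the standard simplex $\Delta_1$. Take $A=\left(\begin{smallmatrix}n&0\\0&1\end{smallmatrix}\right)$. Then $\pi_A(z,w)=(z^n,w)$, $d=n$, and $\pi_A^{-1}(H)=V(1+z^n+w)=C_n$. The contour of $H$ is smooth and embedded. I will check this directly: $H$ is the graph $w=-(1+z)$, and on each circle $|z|=r$ the function $\theta\mapsto\log|1+re^{i\theta}|$ has exactly two nondegenerate critical points, $\theta=0$ and $\theta=\pi$. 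Hence $S_H$ is the real locus $\{z\in\R^*\setminus\{-1\}\}$, and $\Log(S_H)$ consists of the three smooth arcs $y=\log(1+e^x)$, $y=\log|1-e^x|$ for $x<0$, and $y=\log|e^x-1|$ for $x>0$. These are pairwise disjoint and bound the classical line amoeba. Alternatively, I can quote \cite{Mikhalkin2000} for this base case.

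Next I verify connectedness of $C_n$ directly, which I expect to be the only real obstacle, although here it is easy. The projection $(z,w)\mapsto z$ identifies $C_n$ with the graph of $w=-(1+z^n)$ over $\{z\in\C^*: z^n\neq-1\}$. This is $\C$ minus $n+1$ points, which is connected. So $C_n$ is connected, and Theorem~\ref{thm:general-cover} yields smoothness, irreducibility, Newton nondegeneracy, a smooth embedded contour equal to $\partial\A_{f_n}$, generic fibers of $2n$ points, and the ratio $1/n^2$. Since $\Newt(C_n)=A^{\mathsf T}\Delta_1=\Delta_n$, the ratio is computed against the correct polygon.

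For the explicit area I use Proposition~\ref{prop:fiber-area}: $\Area(\A_{f_n})=\Area(\A_H)/n$. By the area-maximality of Harnack curves \cite{MR2001}, $\Area(\A_H)=\pi^2\Area(\Delta_1)=\pi^2/2$, which gives $\pi^2/(2n)$. Dividing by $\pi^2\Area(\Delta_n)=\pi^2 n/2$ gives $1/n^2$. As a consistency check, I will also describe $\A_{f_n}$ explicitly as $\{(x,y):$ the numbers $1,e^{nx},e^y$ satisfy the triangle inequalities$\}$, the image of the line amoeba under $(x,y)\mapsto(x/n,y)$. Non-equivalence to a simple Harnack curve then follows as in Theorem~\ref{thm:general-cover}, because generic fiber cardinality $2n>2$ and area ratio $1/n^2<1$ are both invariant under toric automorphisms.
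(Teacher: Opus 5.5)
Your proposal is correct, but it is organized differently from the paper's proof. You treat $C_n$ purely as an instance of Theorem~\ref{thm:general-cover}. All of the geometric work is done on the base line $1+z+w=0$: the angular derivative $-r\sin\theta/|1+re^{i\theta}|^2$ places the critical locus on $z\in\R$, and this gives three disjoint real-analytic graphs. You then check connectedness of the pullback by hand. Smoothness, irreducibility, nondegeneracy, contour--boundary equality, the $2n$-point fibers, the ratio $1/n^2$ and non-equivalence are all imported from the general theorem and Proposition~\ref{prop:fiber-area}.

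The paper instead verifies nearly everything directly on $C_n$:
\begin{itemize}
\item smoothness from $(f_n)_w=1$;
\item Newton nondegeneracy from the edge truncations;
\item the exact amoeba $|1-e^{nx}|\le e^y\le 1+e^{nx}$;
\item the critical locus $\operatorname{Im}(z^n)=0$, read off from the logarithmic Gauss map $[nz^n:w]$;
\item the $2n$ lifts, from the two conjugate arguments of $u=z^n$ and the $n$ roots of each.
\end{itemize}
The covering $(z,w)\mapsto(z^n,w)$ is used there only to obtain the area $\pi^2/(2n)$.

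Your route is shorter and makes the family a literal illustration of the transport mechanism. It isolates connectedness as the only input specific to the pullback. The paper's route is self-contained at the level of $C_n$ and does not rely on the Harnack facts quoted inside Theorem~\ref{thm:general-cover}. That said, your explicit computation on the line makes those facts elementary in this case anyway.

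One small wording point concerns the circle $r=1$. There the point $\theta=\pi$ is $z=-1$, which does not lie on $H$, so that circle carries only one critical point, not two. Your description $S_H=\{z\in\R^*\setminus\{-1\}\}$ already accounts for this correctly.
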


\begin{proof}
The equation is linear in $w$, so $C_n$ is the graph $w=-1-z^n$ over $\C^*$ with the $n$ points $z^n=-1$ removed.  It is connected and irreducible.  Since $(f_n)_w=1$, it is smooth.  Every edge truncation is a monomial or a binomial with simple nonzero roots, proving Newton nondegeneracy.

Put $x=\log|z|$ and $y=\log|w|$.  For fixed $|z|=e^x$, the number $u=z^n$ has modulus $e^{nx}$, and $|1+u|$ fills the interval $[|1-e^{nx}|,1+e^{nx}]$.  Hence the exact amoeba criterion is $|1-e^{nx}|\leq e^y\leq1+e^{nx}$.  The upper boundary is $y=\log(1+e^{nx})$, while the two lower branches are $y=\log|1-e^{nx}|$ on $x<0$ and $x>0$.

The logarithmic Gauss map is $\gamma_{f_n}=[nz^n:w]$.  On $w=-(1+z^n)$, criticality is equivalent to $z^n/(1+z^n)\in\R$.  If $z^n=a+ib$, then $\operatorname{Im}(z^n/(1+z^n))=b/((1+a)^2+b^2)$.  Thus the critical locus is given by $\operatorname{Im}(z^n)=0$.  Positive values of $z^n$ produce the upper boundary; negative values produce the two lower branches.  No finite internal contour branch occurs, so the contour equals the boundary and is smooth and embedded.

At a generic interior point, the strict triangle inequalities give two possible arguments of $u$, related by conjugation.  Each nonzero $u$ has $n$ distinct roots $z$, and $w$ is then fixed, giving $2n$ lifts.  The curve is the pullback of $1+u+w=0$ by $(z,w)\mapsto(z^n,w)$.  The base amoeba has area $\pi^2/2$ \cite{MR2001,PassareRullgard}; the inverse logarithmic map divides area by $n$.  Since $\Area(\Delta_n)=n/2$, the normalized ratio is $1/n^2$.
\end{proof}

\section{A trapezoidal graph family and an exact contour proof}

Fix $a,b,c>0$ with $b\neq c$, let $m>1$, and define $f_{m,a,b,c}(z,w)=(z^m-a)w-(z^m+b)(z^m+c)$.  Its Newton polygon is $\Delta_m=\operatorname{conv}\{(0,0),(2m,0),(m,1),(0,1)\}$.

\begin{theorem}\label{thm:graph-family}
The curve $C_{m,a,b,c}=V(f_{m,a,b,c})$ is smooth, irreducible, and Newton nondegenerate.  Its critical locus and contour are smooth, the contour is embedded, $\Cont(\A_{f_{m,a,b,c}})=\partial\A_{f_{m,a,b,c}}$, and the generic logarithmic fiber has $2m$ points.  Hence the curve is not torically equivalent to a simple Harnack curve.
\end{theorem}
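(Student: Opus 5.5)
The curve is a monomial pullback, so the plan is to reduce to a base curve and invoke Lemma~\ref{lem:transport} and Proposition~\ref{prop:fiber-area}, with the contour analysis done directly on the base. Put $u=z^m$. Then $f_{m,a,b,c}$ is the pullback under $\pi_A$, $A=\operatorname{diag}(m,1)$, of the base polynomial $g(u,w)=(u-a)w-(u+b)(u+c)$. Its Newton polygon is $\operatorname{conv}\{(0,0),(2,0),(1,1),(0,1)\}$.

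\emph{Algebraic properties.} Since $g$ is linear in $w$, the base curve $H=V(g)$ is the graph of
\[
w=\phi(u)=\frac{(u+b)(u+c)}{u-a}
\]
over $\C^*\setminus\{a,-b,-c\}$. Because $a,b,c>0$ and $b\neq c$, the numerator and denominator share no root. So $H$ is smooth, since $g_w=u-a\neq0$ on $H$, and it is irreducible. The same graph description applies directly to $C=V(f)$: it is the graph of $w=\phi(z^m)$ over $\C^*$ minus finitely many points. A punctured plane is connected, so $C$ is connected, hence irreducible, and smooth. For Newton nondegeneracy, each edge truncation is a monomial or a binomial. The bottom edge $bc+(b+c)z^m+z^{2m}$ has simple roots $z^m=-b,-c$, using $b\neq c$. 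The edge polynomials $z^m-a$ and $z^{2m}+z^mw$ have simple nonzero roots. The slanted edge is $z^{2m}-z^mw$ up to sign. So every truncation has simple roots in the torus.

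\emph{Contour on the base.} By Lemma~\ref{lem:transport}, $S_C=\pi_A^{-1}(S_H)$ and $\Cont(\A_C)=A^{-1}(\Cont(\A_H))$, and both smoothness and embeddedness are transported by the linear map $A^{-1}$. It therefore suffices to prove for $H$ that the critical locus is smooth, the contour is smooth and embedded, and $\Cont=\partial\A$.

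Parametrize $u=re^{i\theta}$ and set $\psi_r(\theta)=\log|\phi(re^{i\theta})|$. On a graph, the point $(u,\phi(u))$ is logarithmically critical exactly when $u\phi'(u)/\phi(u)\in\R$. This is equivalent to $\partial_\theta\psi_r=0$, since $\partial_\theta\log\phi=iu\phi'/\phi$. Now
\[
\partial_\theta\psi_r=\operatorname{Im}\Bigl(\frac{u}{u+b}+\frac{u}{u+c}-\frac{u}{u-a}\Bigr).
\]
Each term has the form $\operatorname{Im}(u/(u+t))=tr\sin\theta/|u+t|^2$. Hence
\[
\partial_\theta\psi_r=r\sin\theta\Bigl(\frac{b}{|u+b|^2}+\frac{c}{|u+c|^2}+\frac{a}{|u-a|^2}\Bigr).
\]
The bracket is strictly positive, so the critical points on each circle are exactly $\theta=0,\pi$, which is the real locus. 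The derivative changes sign simply at both points. For $|u|=r$ fixed, $\log|w|$ therefore attains exactly one maximum and one minimum and is strictly monotone between them. Two points are excluded: $u=a$, where $\psi_r$ tends to $+\infty$, and $u=-b,-c$, where it tends to $-\infty$.

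It follows that the fiber of $\A_H$ over $\log r$ is the interval $[\min\psi_r,\max\psi_r]$ and the critical values are exactly its endpoints. So $\Cont=\partial\A$, with no interior contour. For every $y$ strictly inside the interval there are exactly two preimages on the circle. This gives generic fiber multiplicity $2$ for $H$ and $2m$ for $C$ by Proposition~\ref{prop:fiber-area}.

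\emph{Main obstacle: smoothness and embeddedness of the contour.} The critical locus is $H\cap\{u\in\R^*\}$, a union of real arcs, and it is smooth because $\R^*\setminus\{a,-b,-c\}$ maps to it by the graph. The work lies in showing that its image under $\Log$ is an immersed, injective family of curves, namely the graphs $x\mapsto\max\psi_{e^x}$ and $x\mapsto\min\psi_{e^x}$ over the appropriate $x$-ranges. Each branch is a graph over $x=\log r$. These maps are real-analytic wherever the extremum is attained at $u=\pm r$ away from the poles and zeros, since $\log|\phi(\pm r)|$ is analytic there. This gives immersion and injectivity on each branch.

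The hard part is at the radii $r=a,b,c$. There the circle hits a pole or zero, the extremum is infinite, and the boundary has a tentacle. One must check that the branches coming from $\theta=0$ and $\theta=\pi$ stay disjoint, which holds because one is a maximum and the other a minimum, strictly separated for generic $r$. They could coincide only if $\psi_r$ were constant, and that is impossible. One must also check that the branches approach the asymptotes without extra crossings. I would handle this by listing which of $u=\pm r$ realizes the max and the min on each $r$-interval determined by $a,b,c$, and checking continuity of the extremal branch across those radii.

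Finally, multiplicity $2m\neq2$ rules out toric equivalence to a simple Harnack curve, because fiber cardinality is a toric invariant.
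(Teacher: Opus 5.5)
Your proposal is correct and follows the paper's proof essentially step for step: you pass to the base curve $V(h_{a,b,c})$ through $(z,w)\mapsto(z^m,w)$, show by angular differentiation that every regular circle carries exactly the two critical points $\theta=0,\pi$ realizing the slice maximum and minimum, and transport the contour identity and the fiber count by Lemma~\ref{lem:transport} and Proposition~\ref{prop:fiber-area}. Two harmless slips: since $\partial_\theta\log|\phi|=\operatorname{Re}(iu\phi'/\phi)=-\operatorname{Im}(u\phi'/\phi)$, your derivative needs the paper's minus sign, which shows that $\theta=0$ is the maximum and $\theta=\pi$ the minimum for every regular $r$ (so the case analysis you plan for the ``hard part'' is unnecessary, each extremal branch being an analytic graph over $\log r$ off $\log a$, respectively off $\log b,\log c$), and in your edge list $z^{2m}+z^mw$ is not an edge truncation and should be replaced by the left-edge binomial $-aw-bc$.
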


\begin{proof}
Begin with $h_{a,b,c}(u,w)=(u-a)w-(u+b)(u+c)$, parametrized by $u=t$ and $w=(t+b)(t+c)/(t-a)$.  Write $t=re^{i\theta}$.  Away from a zero or pole, direct differentiation gives
$$
\frac{\partial}{\partial\theta}\log\left|\frac{(t+b)(t+c)}{t-a}\right|
=-r\sin\theta\left(\frac{b}{|t+b|^2}+\frac{c}{|t+c|^2}+\frac{a}{|t-a|^2}\right).
$$
The factor in parentheses is strictly positive.  Thus every regular circle $|t|=r$ contains exactly two simple logarithmic critical points, at $\theta=0$ and $\theta=\pi$.  One gives the unique maximum and the other the unique minimum of the second logarithmic coordinate.  The vertical amoeba slice over $\log r$ is exactly the interval between these two extrema.  Its endpoints are all the critical values on the slice, so the complete contour of $V(h_{a,b,c})$ equals its boundary.  The extremal branches are disjoint graphs over $\log r$, and their toric-end limits at the zeros $-b,-c$ and pole $a$ are smooth.

The covering $\pi_m(z,w)=(z^m,w)$ pulls $V(h_{a,b,c})$ back to $C_{m,a,b,c}$.  The two coefficients in the equation linear in $w$ are relatively prime, so the polynomial is irreducible and the pullback is connected.  A singular point would require $z^m=a$, but substitution gives $-(a+b)(a+c)\neq0$.  The horizontal edge polynomials have simple roots because $a,b,c>0$ and $b\neq c$, and the remaining edge truncations are binomials.  Hence the curve is Newton nondegenerate.  The transport theorem proves the contour statement and gives the multiplicity $2m$.
\end{proof}

For the figure we take $m=3$, $a=3/10$, $b=3$, and $c=10$.  The sky-blue region is rendered as a dense family of translucent fixed-argument logarithmic traces rather than as a uniform fill.  Dark red records the complete contour, which Theorem~\ref{thm:graph-family} proves to be exactly the boundary.

\begin{figure}[h]
\centering
\includegraphics[width=.56\textwidth]{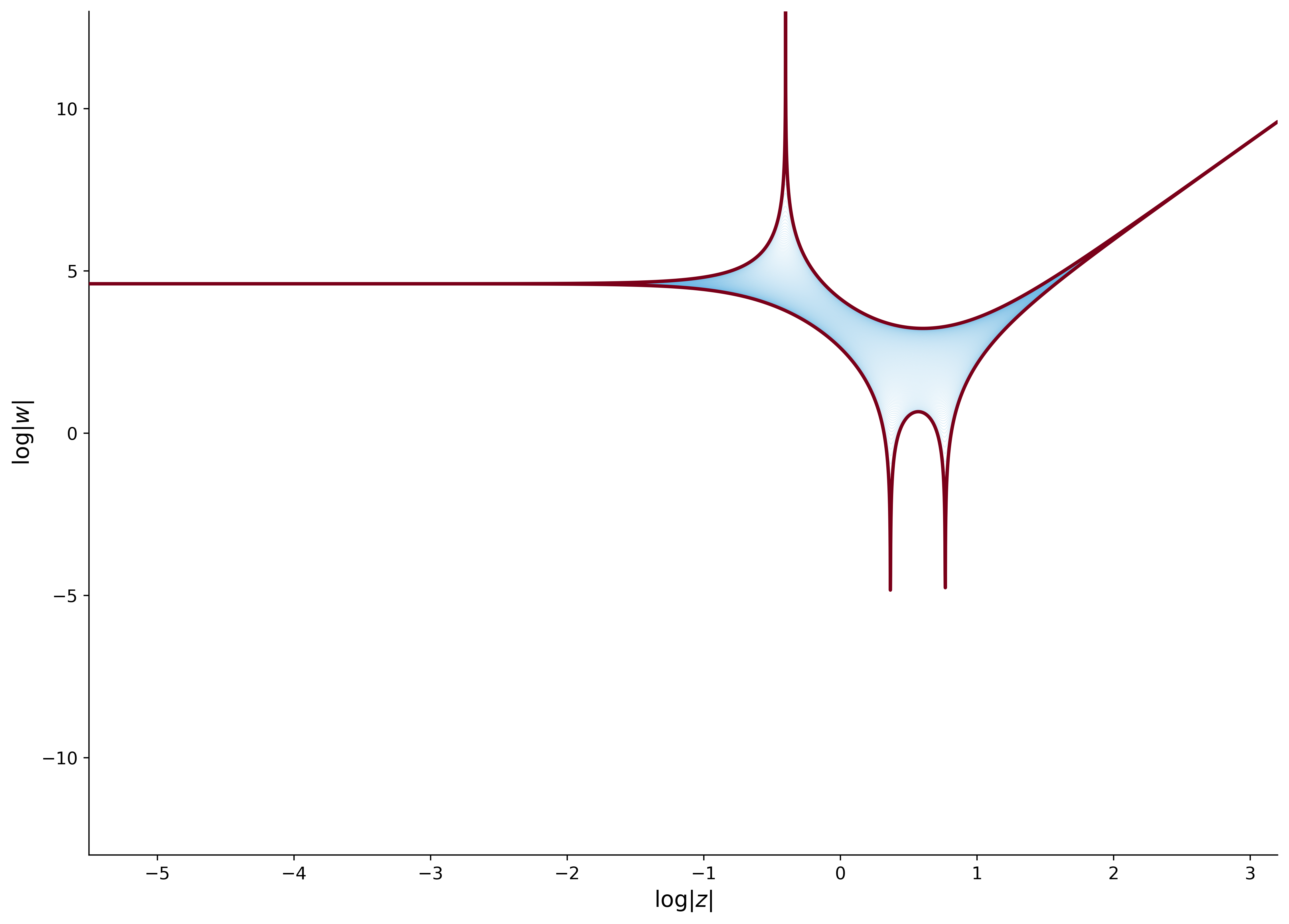}
\caption{The non-flat sky-blue amoeba of $(z^3-3/10)w-(z^3+3)(z^3+10)=0$ and its dark-red complete contour, drawn in an enlarged logarithmic window so that every tentacular direction is visible.  Every red branch is boundary-producing.}
\label{fig:amoeba-contour}
\end{figure}

\section{The lattice index and every quotient polygon}

Let $\Delta\subset\R^2$ be a two-dimensional lattice polygon with vertices $v_0,v_1,\ldots,v_s$.  Define the vertex-difference lattice $\Lambda_\Delta=\sum_{j=1}^s\Z(v_j-v_0)\subset\Z^2$ and its index $\kappa(\Delta)=[\Z^2:\Lambda_\Delta]$.  This index does not depend on the chosen base vertex.  Smith normal form gives the exact formula
$$
\kappa(\Delta)=\gcd_{1\leq i<j\leq s}|\det(v_i-v_0,v_j-v_0)|,
$$
with zero determinants omitted \cite{Newman,Cohen}.

\begin{theorem}\label{thm:arithmetic}
There exist a lattice polygon $\Delta_0$, a vector $t\in\Z^2$, and an integral matrix $A$ with $|\det A|>1$ such that $\Delta=t+A^{\mathsf T}\Delta_0$ if and only if $\kappa(\Delta)>1$.  All such factorizations correspond exactly to proper rank-two intermediate lattices $L$ satisfying $\Lambda_\Delta\subseteq L\subsetneq\Z^2$.
\end{theorem}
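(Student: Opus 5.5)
We prove the two implications separately and then identify the set of factorizations with the set of intermediate lattices. Throughout, "lattice polygon" means a polygon whose vertices lie in $\Z^2$.

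\emph{Forward implication.} Suppose $\Delta=t+A^{\mathsf T}\Delta_0$ with $|\det A|>1$, and let $u_0,\dots,u_s$ be the vertices of $\Delta_0$. Since $A^{\mathsf T}$ is an invertible linear map, it sends vertices to vertices, so $v_j=t+A^{\mathsf T}u_j$ after reindexing. Hence each difference satisfies $v_j-v_0=A^{\mathsf T}(u_j-u_0)\in A^{\mathsf T}\Z^2$. It follows that $\Lambda_\Delta\subseteq A^{\mathsf T}\Z^2$. The lattice $A^{\mathsf T}\Z^2$ has index $|\det A|$, so
$$
\kappa(\Delta)=[\Z^2:\Lambda_\Delta]\geq|\det A|>1 .
$$
In this way every factorization produces the intermediate lattice $L=A^{\mathsf T}\Z^2$, which satisfies $\Lambda_\Delta\subseteq L\subsetneq\Z^2$. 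Since $\Delta$ is two-dimensional, $\Lambda_\Delta$ has rank two, and therefore so does $L$.

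\emph{Reverse implication.} Conversely, let $L$ be any rank-two lattice with $\Lambda_\Delta\subseteq L\subsetneq\Z^2$. When $\kappa(\Delta)>1$ we may simply take $L=\Lambda_\Delta$.
\begin{itemize}
\item Choose a basis of $L$ and let $B$ be the integral matrix with these basis vectors as columns. Then $L=B\Z^2$ and $|\det B|=[\Z^2:L]>1$.
\item Set $A=B^{\mathsf T}$, $t=v_0$, and $\Delta_0=B^{-1}(\Delta-v_0)$.
\item Each vertex $B^{-1}(v_j-v_0)$ of $\Delta_0$ lies in $\Z^2$, because $v_j-v_0\in L=B\Z^2$. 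So $\Delta_0$ is a lattice polygon.
\item By construction $\Delta=t+A^{\mathsf T}\Delta_0$, as required.
\end{itemize}

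\emph{The correspondence.} It remains to make precise the sense in which factorizations correspond "exactly" to lattices. A factorization $(\Delta_0,t,A)$ is determined by $L=A^{\mathsf T}\Z^2$ up to two kinds of change:
\begin{itemize}
\item replacing $A^{\mathsf T}$ by $A^{\mathsf T}U$ with $U\in\operatorname{GL}(2,\Z)$, together with $\Delta_0\mapsto U^{-1}\Delta_0$;
\item translating $\Delta_0$ by an integral vector, together with the compensating change of $t$.
\end{itemize}
Two factorizations with the same lattice $L$ have matrices $A_1^{\mathsf T}$ and $A_2^{\mathsf T}$ whose columns are bases of the same lattice. They therefore differ by a unimodular $U$. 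The translation vectors then differ by an element of $L$, which can be absorbed into a lattice translation of $\Delta_0$.

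\emph{Main obstacle.} I expect the main difficulty to be stating this equivalence relation precisely, so that "exactly" becomes a genuine bijection. The bijection is between equivalence classes of factorizations, up to $\operatorname{GL}(2,\Z)$ acting on the source and lattice translations, and the intermediate lattices $L$. Once that is fixed, Hermite normal form picks one canonical basis for each $L$. Finiteness of the set of such $L$ follows because they correspond to subgroups of the finite group $\Z^2/\Lambda_\Delta$. The gcd formula for $\kappa(\Delta)$ stated before the theorem is not needed in the proof itself; it only makes the criterion computable.
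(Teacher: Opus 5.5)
Your proof is correct and follows the paper's argument: the forward direction via $\Lambda_\Delta\subseteq A^{\mathsf T}\Z^2$ and the converse via a basis matrix $B$ of an intermediate lattice, with $\Delta_0=B^{-1}(\Delta-v_0)$ and $A=B^{\mathsf T}$, are exactly the paper's steps. Your treatment of the correspondence is more careful than the paper's, which simply asserts that the two constructions are inverse to each other; you correctly note that this holds only modulo $\operatorname{GL}(2,\Z)$ changes of basis and integral translations of $\Delta_0$.
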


\begin{proof}
If $\Delta=t+A^{\mathsf T}\Delta_0$, then every vertex difference belongs to the proper lattice $A^{\mathsf T}\Z^2$, hence $\Lambda_\Delta$ is proper and $\kappa(\Delta)>1$.  Conversely, suppose $\kappa(\Delta)>1$ and choose a basis matrix $B$ for any intermediate lattice $L$.  Since every $v_j-v_0$ lies in $B\Z^2$, the polygon $\Delta_0=B^{-1}(\Delta-v_0)$ has integral vertices.  Taking $A^{\mathsf T}=B$ gives the factorization, whose covering degree is $[\Z^2:L]=|\det B|$.  These two constructions are inverse to each other.
\end{proof}

Every intermediate lattice can be enumerated without repetition by column Hermite normal form.  For every divisor $N>1$ of $\kappa(\Delta)$, examine the matrices $B=\left(\begin{smallmatrix}a&b\\0&d\end{smallmatrix}\right)$ with $ad=N$ and $0\leq b<a$.  Retain precisely those for which every vertex difference belongs to $B\Z^2$.  If a difference is $(x,y)$, membership is equivalent to $d\mid y$ and $a\mid x-b(y/d)$.  The quotient polygon is $B^{-1}(\Delta-v_0)$ and the covering matrix is $A=B^{\mathsf T}$.  Hermite uniqueness proves that no quotient lattice is omitted or repeated.

The polygon is called monomially primitive when $\kappa(\Delta)=1$.  Such a polygon cannot arise as a nonunimodular integral image of a smaller lattice polygon.  The primitive trapezoid $\operatorname{conv}\{(0,0),(2,0),(1,1),(0,1)\}$ has index one because $\det((1,1),(0,1))=1$.

\subsection{Connectedness and toric-end monodromy}\label{sec:connectedness}

Let $i:H\hookrightarrow(\C^*)^2$ be a connected smooth curve and identify $\pi_1((\C^*)^2)$ with $\Z^2$.  Since the target group is abelian, define $\Lambda_H=i_*H_1(H,\Z)\subseteq\Z^2$.  The covering subgroup of $\pi_A$ is $A\Z^2$, and its deck group is $G_A=\Z^2/A\Z^2$.

\begin{theorem}\label{thm:monodromy}
The pullback $\pi_A^{-1}(H)$ is connected if and only if $\Lambda_H+A\Z^2=\Z^2$.  In general, its number of connected components is $[\Z^2:\Lambda_H+A\Z^2]$.
\end{theorem}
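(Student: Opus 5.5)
We will use standard covering-space theory applied to the restriction of $\pi_A$ over $H$. The covering $\pi_A:(\C^*)^2\to(\C^*)^2$ is a connected regular covering, since $(\C^*)^2$ is connected and $\pi_A$ is a surjective group homomorphism with finite kernel. Its characteristic subgroup is $A\Z^2\subseteq\Z^2=\pi_1((\C^*)^2)$. Here we identify $\pi_1$ with $H_1$ via the winding vectors $(\frac1{2\pi i}\oint dz/z,\frac1{2\pi i}\oint dw/w)$. Under this identification, a loop lifts to a closed loop exactly when its class lies in $A\Z^2$, since $\pi_A^*$ multiplies winding vectors by $A$ on the lift side.

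The restriction $\pi_A^{-1}(H)\to H$ is again a covering of degree $d$, being the pullback of a covering along the inclusion $i$. Fix a base point $h\in H$. Its fiber $F=\pi_A^{-1}(h)$ is a torsor for the deck group $G_A=\Z^2/A\Z^2$. Since $H$ is connected and locally path-connected, the path components of the total space correspond bijectively to the orbits of the monodromy action of $\pi_1(H,h)$ on $F$.

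The key step is to compute this monodromy action. A loop $\gamma$ in $H$ at $h$ acts on $F$ by the deck transformation corresponding to the class of $i_*[\gamma]$ in $\Z^2/A\Z^2$. This is the standard fact for regular coverings: the monodromy of $\pi_A$ over $(\C^*)^2$ is the composite $\pi_1((\C^*)^2)\to G_A$, and the monodromy over $H$ is its precomposition with $i_*$. Because $G_A$ is abelian, this action factors through $H_1(H,\Z)$, so the acting subgroup is the image of $\Lambda_H$ in $G_A$, namely $(\Lambda_H+A\Z^2)/A\Z^2$. This group acts freely on the torsor $F$, so every orbit has size $|(\Lambda_H+A\Z^2)/A\Z^2|$. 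The number of orbits is therefore
$$
\frac{|\Z^2/A\Z^2|}{|(\Lambda_H+A\Z^2)/A\Z^2|}=[\Z^2:\Lambda_H+A\Z^2].
$$
Connectedness is then equivalent to this index being $1$, that is, to $\Lambda_H+A\Z^2=\Z^2$.

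The main point to check carefully is the identification of the monodromy of $\pi_A$ with reduction modulo $A\Z^2$, including the direction convention. It is tempting to write $A^{-1}$ or $A^{\mathsf T}$ here. Writing $\pi_A$ as $\exp$ composed with $A$ on the universal cover $\C^2$ (in logarithmic coordinates), a loop lifts to a path whose endpoints differ by $2\pi i\,A^{-1}v$, where $v$ is its winding vector. This displacement is a deck element of the cover exactly modulo $\Z^2$, and it is trivial iff $v\in A\Z^2$. This gives the isomorphism $\Z^2/A\Z^2\cong\ker\pi_A$ and the subgroup $A\Z^2$ used above. Finally, we need that path components coincide with connected components for the complex curve $\pi_A^{-1}(H)$. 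This holds because it is a manifold, being smooth by Lemma~\ref{lem:transport}.
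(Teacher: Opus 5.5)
Your proof is correct and follows the same route as the paper: you identify the fiber with the $G_A=\Z^2/A\Z^2$-torsor, note that lifting a loop in $H$ translates by the residue of $i_*[\gamma]$, and conclude that the monodromy orbits are the cosets of $(\Lambda_H+A\Z^2)/A\Z^2$, which gives $[\Z^2:\Lambda_H+A\Z^2]$ components. Your additional checks (freeness of the action, so all orbits have equal size; the $A$ versus $A^{-1}$ convention via logarithmic coordinates; path components versus connected components) fill in details that the paper leaves to the standard classification of covering spaces.
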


\begin{proof}
Label one fiber of the covering by $G_A$.  Lifting a loop in $H$ translates the label by the residue class of its image in $\Z^2$.  The monodromy orbits are therefore the cosets of the image of $\Lambda_H$ in $G_A$.  Each orbit is the intersection of one connected component of the pullback with the chosen fiber.  The action is transitive exactly when $\Lambda_H\to G_A$ is surjective, which is equivalent to the displayed lattice equality.  The number of orbits is the displayed index, as in the standard classification of covering spaces \cite{Hatcher}.
\end{proof}

Assume that $H$ is Newton nondegenerate with polygon $\Delta_0$.  For each edge $E$, let $n_E$ be a primitive outward normal.  A small loop around a transverse intersection with the toric divisor corresponding to $E$ maps, up to sign, to $n_E$ in $H_1((\C^*)^2,\Z)$ \cite{CoxLittleSchenck}.  Hence $N_{\Delta_0}=\sum_E\Z n_E$ is contained in $\Lambda_H$.

\begin{proposition}\label{prop:end-test}
If $N_{\Delta_0}+A\Z^2=\Z^2$, then $\pi_A^{-1}(H)$ is connected.  The condition is checked by forming the integer matrix whose columns are the primitive edge normals and the two columns of $A$; it holds exactly when the greatest common divisor of all its $2\times2$ minors is one.
\end{proposition}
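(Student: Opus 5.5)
The first assertion follows from Theorem~\ref{thm:monodromy} together with the containment $N_{\Delta_0}\subseteq\Lambda_H$ established just before the statement. Adding $A\Z^2$ to both sides of this containment gives $N_{\Delta_0}+A\Z^2\subseteq\Lambda_H+A\Z^2\subseteq\Z^2$. If the left-hand lattice is all of $\Z^2$, so is the middle one. Theorem~\ref{thm:monodromy} then gives exactly one connected component of $\pi_A^{-1}(H)$. The only care needed is the sign ambiguity in identifying a toric-end loop with $n_E$. Since $\Lambda_H$ is a subgroup, it contains $-n_E$ whenever it contains $n_E$, so the ambiguity is harmless. I would also recall that Newton nondegeneracy guarantees that $H$ meets each edge divisor transversally in at least one point. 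Each $n_E$ therefore actually occurs as the class of some loop in $H$.

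The second assertion is pure lattice arithmetic. Let $M\in\operatorname{Mat}_{2\times k}(\Z)$ be the matrix whose columns are the primitive normals $n_E$ followed by the two columns of $A$. Its column lattice is $M\Z^k=N_{\Delta_0}+A\Z^2$. This lattice has rank two because $\det A\neq0$. By Smith normal form \cite{Newman,Cohen} there are unimodular $P,Q$ with $PMQ=\left(\begin{smallmatrix}e_1&0&\cdots\\0&e_2&\cdots\end{smallmatrix}\right)$. Hence $[\Z^2:M\Z^k]=e_1e_2$. Recall also that the second determinantal divisor $e_1e_2$ equals the gcd of all $2\times2$ minors of $M$, and that this gcd is invariant under left and right unimodular multiplication by Cauchy--Binet. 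The lattice equality therefore holds iff $e_1e_2=1$, iff the gcd of the $2\times2$ minors equals one. This is the same computation that produced the formula for $\kappa(\Delta)$, now applied to a different generating set.

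I do not expect a real obstacle here. The subtle point lies upstream, in the statement that a loop around an end maps to $\pm n_E$ in $H_1((\C^*)^2,\Z)$. The plan takes that statement as given from the preceding paragraph. I would only remark that the test is sufficient but not necessary. Compact cycles of $H$ may contribute further monodromy when the test fails, and in that case the full criterion of Theorem~\ref{thm:monodromy} must be used.
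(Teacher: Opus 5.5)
Your proposal is correct and follows the paper's proof. The paper's proof has the same two steps. First, the containment $N_{\Delta_0}\subseteq\Lambda_H$ together with Theorem~\ref{thm:monodromy} gives connectedness. Second, Smith normal form identifies the gcd of the $2\times2$ minors with the index of the column lattice. Your remarks on the sign ambiguity, on Newton nondegeneracy guaranteeing transverse end points, and on the Cauchy--Binet invariance of the determinantal divisor fill in details the paper leaves implicit.
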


\begin{proof}
The inclusion $N_{\Delta_0}\subseteq\Lambda_H$ shows that the hypothesis implies $\Lambda_H+A\Z^2=\Z^2$.  Theorem~\ref{thm:monodromy} gives connectedness.  For any rank-two integer matrix, the gcd of its maximal minors is the index of the lattice generated by its columns, by Smith normal form \cite{Newman,Cohen}.
\end{proof}

The end test is sufficient but not necessary.  Compact cycles in $H$ can enlarge $N_{\Delta_0}$ to a larger subgroup $\Lambda_H$ and can make the full monodromy surjective even when the end normals alone do not.  Therefore failure of the gcd test is an inconclusive verdict, not a proof that the pullback is disconnected.  Exact connectedness in such a case requires the full lattice $\Lambda_H$, obtained from an explicit homology basis or a certified continuation graph.  Appendix~\ref{app:algorithm} gives an executable implementation that extracts all compact-cycle winding vectors from such a graph and then computes the exact number $[\Z^2:\Lambda_H+A\Z^2]$ of pullback components.

\subsection{The primitive trapezoid}\label{sec:primitive}

Consider $f_{p,q}(z,w)=1+z+pz^2+w+qzw=P_p(z)+Q_q(z)w$, where $P_p(z)=1+z+pz^2$ and $Q_q(z)=1+qz$.  Its Newton polygon is $\Delta=\operatorname{conv}\{(0,0),(2,0),(1,1),(0,1)\}$.  The support differences contain $(1,0)$ and $(0,1)$, hence $\kappa(\Delta)=1$.  Theorem~\ref{thm:arithmetic} proves that no nonunimodular Newton-polygon quotient can account for this construction.

Put $p_0=1/100-i/5000$, $q_0=1/20-7i/200$, and $\varepsilon=3/50000000$.  Let $\mathcal B$ be the closed rational box defined by $|\operatorname{Re}p-1/100|\leq\varepsilon$, $|\operatorname{Im}p+1/5000|\leq\varepsilon$, $|\operatorname{Re}q-1/20|\leq\varepsilon$, and $|\operatorname{Im}q+7/200|\leq\varepsilon$.

\begin{theorem}\label{thm:primitive}
For every $(p,q)\in\mathcal B$, the curve $C_{p,q}=V(f_{p,q})\subset(\C^*)^2$ is smooth and irreducible, and its toric compactification is smooth and Newton nondegenerate.  Its logarithmic critical locus is smooth, its complete contour is a smooth embedded real-analytic curve, and $\Cont(\A_{f_{p,q}})=\partial\A_{f_{p,q}}$.  Every regular logarithmic fiber has at most two points.  No member of this box is torically equivalent to a smooth simple Harnack curve.
\end{theorem}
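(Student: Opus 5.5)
The plan follows the graph-family argument of Theorem~\ref{thm:graph-family}, with the positivity that was automatic there replaced by a uniform perturbative estimate over $\mathcal B$.

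\emph{Smoothness and nondegeneracy.} Since $f_{p,q}=P_p+Q_qw$ is linear in $w$, $C_{p,q}$ is the graph $w=g(z):=-P_p(z)/Q_q(z)$ over $\C^*$, with the zeros of $P_pQ_q$ removed. Irreducibility and smoothness reduce to $\gcd(P_p,Q_q)=1$, i.e.\ $P_p(-1/q)=1-1/q+p/q^2\neq0$, equivalently $q^2-q+p\neq0$. This holds on $\mathcal B$ with a wide margin: at $(p_0,q_0)$ the modulus is about $0.06$, while $\varepsilon$ is of order $10^{-7}$. Nondegeneracy is a finite edge check. The bottom edge $1+z+pz^2$ needs discriminant $1-4p\neq0$. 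The top edge $w+qzw$ is a binomial. The slanted edge $pz^2+qzw$ and the vertical edge $1+w$ are binomials. The vertex terms are monomials.

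\emph{Contour equals boundary.} Write $z=re^{i\theta}$ and $\phi_r(\theta)=\log|g(re^{i\theta})|$. By the Gauss map criterion, $S_f$ is exactly the set of points where $\partial_\theta\phi_r=0$, because $\Log$ restricted to the graph has rank one exactly there. Using $\partial_\theta\log|z-\alpha|=\operatorname{Im}\bigl(z/(z-\alpha)\bigr)$, we get
$$\partial_\theta\phi_r=\operatorname{Im}\Bigl(\frac{zP_p'(z)}{P_p(z)}-\frac{zQ_q'(z)}{Q_q(z)}\Bigr).$$
The key claim is that for every $r>0$ and every $(p,q)\in\mathcal B$ this function has exactly two zeros on the circle, and both are simple. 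Once that is shown, we argue as in Theorem~\ref{thm:graph-family}. Each vertical slice of the amoeba is the interval between one maximum and one minimum. All critical values are endpoints of slices, so $\Cont=\partial\A$. Simplicity of the zeros gives smoothness of $S_f$ via the implicit function theorem in $(r,\theta)$. The two branches are disjoint graphs over $x=\log r$, so the contour is embedded, and it is real-analytic in $x$. Each regular fibre lies on a single circle, and $\phi_r$ is unimodal there, so a regular value is attained at most twice, giving the fibre bound.

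\emph{Harmonic-dominance estimate (main obstacle).} The difficulty is the uniformity in $r\in(0,\infty)$ and over the whole box. I would split $r$ into ranges. For small $r$, the function is dominated by the lowest harmonic $\operatorname{Im}((1-q)z)$ plus $O(r^2)$. For large $r$, after the substitution $z\mapsto 1/z$ it is dominated by $\operatorname{Im}(1/(pz)+\cdots)$ from the leading behaviour $g\sim -(p/q)z$. The two roots of $P_p$ have moduli near $1$ and near $100$, and the root of $Q_q$ has modulus about $16$. Near each of these radii the corresponding term $\operatorname{Im}(z/(z-\alpha))$ dominates. Its $\theta$-derivative has a definite sign away from $\arg\alpha$, which again gives one maximum and one minimum.

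On each range I would write $\partial_\theta\phi_r=\rho(r)\sin(\theta-\theta_0(r))+E$. I would then prove $|E|<\rho$, and $|\partial_\theta E|<\rho$ near the zeros, using outward-rounded rational interval arithmetic on finitely many radius intervals covering $(0,\infty)$, with the parameters enclosed by $\mathcal B$. Near the zero radii the margins are thinnest, and this is where I expect most of the work. These radii are singular for $\phi_r$, so there I would use the exact pole or zero term as the dominant harmonic.

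\emph{Non-Harnack.} A toric equivalence to a real curve would force the phase of every invariant affine-relation product of coefficients to be real. The relation $(0,0)+(2,0)+(0,1)\cdot 2=2(1,0)+2\cdot$ is awkward, so I would instead use the relation $(0,0)+(1,1)=(1,0)+(0,1)$. This gives the invariant $(1\cdot q)/(1\cdot 1)=q$ up to scaling, and it is invariant under torus translation since the weights balance. Because $\operatorname{Im}q\approx-0.035\neq0$ throughout $\mathcal B$, no member is equivalent to a real curve. Hence no member is equivalent to a simple Harnack curve.
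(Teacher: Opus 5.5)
Your treatment of smoothness, irreducibility and nondegeneracy ($q^2-q+p\neq0$, $1-4p\neq0$, the edge checks) agrees with the paper. So do the slice argument for $\Cont(\A_{f_{p,q}})=\partial\A_{f_{p,q}}$, embeddedness and the two-point fibre bound. Your phase obstruction also matches: the invariant $q$ from $(0,0)+(1,1)=(1,0)+(0,1)$ is the paper's normalization argument written invariantly.

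\textbf{The gap.} It is in the step you yourself flag as the main obstacle. You set up harmonic dominance for the uncleared function $\partial_\theta\phi_r$; the missing idea is to clear denominators. With $A=z(P_p'Q_q-P_pQ_q')$ and $B=P_pQ_q$ one has $zP_p'/P_p-zQ_q'/Q_q=A/B$. Hence $\partial_\theta\phi_r=-N_r/|B|^2$, where $N_r(\theta)=\operatorname{Im}(A\overline{B})=C_0+2\operatorname{Re}\sum_{k=1}^{3}C_ke^{ik\theta}$. This is a trigonometric polynomial of degree three, regular on every circle, with coefficients polynomial in $r$. For $N_r$ the paper certifies the two inequalities $B_1<A_1$ and $D_1^2<A_1^2-B_1^2$ uniformly in $r>0$, on the charts $r\le1$ and $1/r\le1$. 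That gives exactly two simple zeros of $N_r$ on every circle, with no separate analysis at special radii. On the three radii $|\alpha_1|\approx1$, $|1/q|\approx16.4$, $|\alpha_2|\approx99$, one of these two zeros is simply the removed point. So there is one critical point and the slice is a half-line; your claim of two zeros for every $r$ needs this correction.

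\textbf{Why the uncleared version fails.} The nonconstant weight $|P_pQ_q|^{-2}$ gives $\partial_\theta\phi_r$ the full Poisson-type series, up to sign $\sum_n(|\alpha|/r)^{\pm n}\sin n(\theta-\arg\alpha)$, of every zero and pole. As a result the first harmonic is not dominant at intermediate radii.
\begin{itemize}
\item At the centre of $\mathcal B$ and $r=5$, by my rough computation, the first Fourier mode has amplitude only about $0.17$. The contributions of $\alpha_1$, $\alpha_2$ and $-1/q$ nearly cancel in one component. The second mode alone contributes about $0.18$ to $\partial_\theta E$. Bounds on Fourier coefficients of $E$ cannot close there; you would have to localize in $\theta$ as well. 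For $N_5$, by contrast, $A_1\approx4.7$, $B_1\approx2.8$, $D_1\approx1.3$.
\item The thin margins you expect near the zero and pole radii are likewise an artifact of not clearing. Near $|z|=|1/q|$ your model pole term has slope only about $1/4$ at its antipodal zero. This is comparable to the $\theta$-derivatives of the $\alpha$-terms there. Meanwhile $N_r$ has $B_1/A_1\approx0.43$ at that radius.
\end{itemize}

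\textbf{The derivative condition is too weak.} Suppose $|E|\le\eta<\rho$. The zeros of $\rho\sin\psi+E$ are only confined to the arcs where $|\sin\psi|\le\eta/\rho$. On those arcs $\rho|\cos\psi|$ can be as small as $\sqrt{\rho^2-\eta^2}$. So you need $|\partial_\theta E|<\sqrt{\rho^2-\eta^2}$ on those arcs, not $|\partial_\theta E|<\rho$. This is exactly the paper's $D_1^2<A_1^2-B_1^2$.

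A minor point: $\partial_\theta\log|z-\alpha|=-\operatorname{Im}(z/(z-\alpha))$. The sign error in your formula is harmless for the zero set.
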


The proof below expands the elimination and rational interval  used in \cite{NisseRigidity}.  
%
%
The equation is linear in $w$ and gives $w=-P_p(z)/Q_q(z)$.  The exact resultant is $\operatorname{Res}_z(P_p,Q_q)=p+q^2-q$, up to the harmless conventional power of the leading coefficient.  Writing $p=u+iv$ and $q=a+ib$, direct outward-rounded rational interval evaluation on $\mathcal B$ gives
$$
u\in\left[\frac{499997}{50000000},\frac{500003}{50000000}\right],\quad
v\in\left[-\frac{10003}{50000000},-\frac{9997}{50000000}\right],
$$
$$
a\in\left[\frac{2499997}{50000000},\frac{2500003}{50000000}\right],\quad
b\in\left[-\frac{1750003}{50000000},-\frac{1749997}{50000000}\right].
$$
For the resultant, the same exact calculation yields
$$
\operatorname{Re}(p+q^2-q)\in
\left[-\frac{193625591}{5000000000},-\frac{193624409}{5000000000}\right]
$$
and
$$
\operatorname{Im}(p+q^2-q)\in
\left[\frac{39124852250009}{1250000000000000},
\frac{39125147750009}{1250000000000000}\right].
$$
In particular, $P_p$ and $Q_q$ have no common root.  The discriminant $1-4p$ satisfies
$$
\operatorname{Re}(1-4p)\in
\left[\frac{11999997}{12500000},\frac{12000003}{12500000}\right],\qquad
\operatorname{Im}(1-4p)\in
\left[\frac{9997}{12500000},\frac{10003}{12500000}\right].
$$
Thus $P_p$ has two simple nonzero roots, $Q_q$ has one simple nonzero root, and they are relatively prime.  If $f=f_z=f_w=0$, then $Q_q(z)=P_p(z)=0$, which is impossible.  Hence the affine curve is smooth.  A factorization of the primitive polynomial $P_p+Q_qw$, which has degree one in $w$, would force a common nonconstant divisor of $P_p$ and $Q_q$; therefore it is irreducible.  The four face truncations have simple torus roots because $p$, $q$, $1-4p$, and $p+q^2-q$ avoid zero.  This proves Newton nondegeneracy and toric transversality by the standard face criterion \cite{GKZ,CoxLittleSchenck}.

\subsection{Real critical polynomial and elimination ideals}

For reference, put $z=x+iy$, $w=s+it$, $p=u+iv$, and $q=a+ib$.  Write $F_1=\operatorname{Re}f_{p,q}$ and $F_2=\operatorname{Im}f_{p,q}$.  Their completely expanded forms are
$$
F_1=1+x+u(x^2-y^2)-2vxy+s+a(xs-yt)-b(xt+ys)
$$
and
$$
F_2=y+v(x^2-y^2)+2uxy+t+b(xs-yt)+a(xt+ys).
$$
The logarithmic Gauss condition is the polynomial equation $K=\operatorname{Im}(z f_z\overline{w f_w})=0$, where $f_z=1+2pz+qw$ and $f_w=1+qz$.  Thus the affine logarithmic critical source is the real algebraic set $F_1=F_2=K=0$ saturated by $(x^2+y^2)(s^2+t^2)$.

Let $J$ be the $3\times4$ Jacobian matrix of $(F_1,F_2,K)$ with columns ordered by $(x,y,s,t)$, and let $\nu_j=(-1)^{j+1}\det J_{\widehat j}$.  At a smooth critical point, $\nu=(\nu_1,\nu_2,\nu_3,\nu_4)$ spans the tangent line.  Clearing the positive logarithmic denominators gives the image tangent
$$
V_1=(x\nu_1+y\nu_2)(s^2+t^2),\qquad
V_2=(s\nu_3+t\nu_4)(x^2+y^2).
$$
The saturated ramification ideal is
$$
I_{\mathrm{ram}}=\bigl(F_1,F_2,K,V_1,V_2\bigr):
\bigl((x^2+y^2)(s^2+t^2)I_3(J)\bigr)^\infty,
$$
where $I_3(J)$ denotes the ideal of maximal minors.  Empty real locus of this ideal in all toric charts excludes a zero tangent of the contour image.

For the off-diagonal image test, introduce a second lift $(X,Y,S,T)$ and the corresponding primed equations.  Equality of logarithmic images is
$$
E_z=x^2+y^2-X^2-Y^2=0,qquad E_w=s^2+t^2-S^2-T^2=0.
$$
With $D=(x-X)^2+(y-Y)^2+(s-S)^2+(t-T)^2$ and
$$
U=(x^2+y^2)(s^2+t^2)(X^2+Y^2)(S^2+T^2),
$$
the equations $\sigma D-1=0$ and $\tau U-1=0$ remove the ordinary diagonal and all coordinate hyperplanes.  The saturated two-lift ideal is generated by $F_1,F_2,K,F'_1,F'_2,K',E_z,E_w,\sigma D-1,\tau U-1$.  For the present family the angular certificate below proves that its real locus is empty.  This is stronger than testing finitely many sampled pairs and avoids any ambiguity from close numerical images.

\subsection{Angular elimination and the rational interval inequalities}

Write $z=re^{i\theta}$.  On the graph, the logarithmic map is $(r,\theta)\mapsto(\log r,Y_r(\theta))$, where $Y_r(\theta)=\log|P_p(re^{i\theta})|-\log|Q_q(re^{i\theta})|$.  Put
$$
A(z)=z(P_p'Q_q-P_pQ_q')=(1-q)z+2pz^2+pqz^3
$$
and $B(z)=P_p(z)Q_q(z)=1+(1+q)z+(p+q)z^2+pqz^3$.  Away from the zeros and pole, the exact critical equation is $N_r(\theta)=\operatorname{Im}(A(re^{i\theta})\overline{B(re^{i\theta})})=0$.

Write $A(z)=\sum_{j=1}^3a_jz^j$ and $B(z)=\sum_{\ell=0}^3b_\ell z^\ell$, with $(a_1,a_2,a_3)=(1-q,2p,pq)$ and $(b_0,b_1,b_2,b_3)=(1,1+q,p+q,pq)$.  Define
$$
D_k(r)=\sum_{j-\ell=k}a_j\overline{b_\ell}r^{j+\ell},\qquad
C_k(r)=\frac{D_k(r)-\overline{D_{-k}(r)}}{2i}.
$$
Then $N_r(\theta)=C_0(r)+2\operatorname{Re}(C_1(r)e^{i\theta}+C_2(r)e^{2i\theta}+C_3(r)e^{3i\theta})$.  Every real and imaginary part of every $C_k$ is a polynomial in $r$ with rational interval coefficients on $\mathcal B$.

Set $A_1=2|C_1|$, $B_1=|C_0|+2|C_2|+2|C_3|$, and $D_1=4|C_2|+6|C_3|$.  The rational certificate establishes, uniformly for $(p,q)\in\mathcal B$ and $r>0$,
$$
B_1(r)<A_1(r),\qquad D_1(r)^2<A_1(r)^2-B_1(r)^2.
$$
The radial half-line is covered by the two rational charts $0<r\leq1$ and $0<\rho=1/r\leq1$.  In each chart, positive monomial factors are removed, all modulus bounds are squared only after their signs have been certified, and the remaining polynomial interval expressions are evaluated by Horner form with outward rational endpoints.  Boxes for which a direct interval evaluation does not decide the sign are bisected dyadically.  Sturm chains of the endpoint polynomials certify that no missed sign change occurs on a terminal radial interval.  This exact procedure and the resulting box $\mathcal B$ are recorded in \cite{NisseRigidity}; the elementary coefficient exclusions have been reproduced above so that the toric and affine portions of the certificate are independently checkable from the present text.

After rotating the angular variable by $\arg C_1(r)$, the first harmonic is $A_1\cos\theta$ and the remaining harmonics have absolute value at most $B_1$.  Every zero lies in one of two disjoint arcs on which $|\sin\theta|\geq\sqrt{A_1^2-B_1^2}/A_1$.  The derivative of the first harmonic has magnitude at least $\sqrt{A_1^2-B_1^2}$ there, while the derivative of the remaining harmonics has magnitude at most $D_1$.  The second strict inequality forces one sign for the full derivative on each arc.  Hence every circle has exactly two simple angular critical points, one minimum and one maximum.

Along either critical branch, the contour parametrization has first coordinate $\log r$ and therefore derivative $1/r\neq0$.  This excludes ramification.  If two distinct critical lifts had the same logarithmic image, equality of the first coordinate would give the same $r$, but the two critical values at that radius are the distinct minimum and maximum of $Y_r$.  Hence the off-diagonal two-lift real locus is empty.

\subsection{Chambers, toric ends, and phase obstruction}

The vertical amoeba slice over $\log r$ is the interval between the minimum and maximum of $Y_r$.  Its endpoints are precisely the two critical values.  Therefore every critical value is boundary-producing and every boundary point is critical.  This proves $\Cont(\A_{f_{p,q}})=\partial\A_{f_{p,q}}$, while strict separation of the extrema and absence of ramification prove that the contour is embedded and real analytic.  Every regular value inside a slice has two angular lifts, and every value outside has none.

The toric boundary has five intersection points counted with edge multiplicity.  At either simple root $\alpha$ of $P_p$, one has $w=-P_p'(\alpha)(z-\alpha)/Q_q(\alpha)+O((z-\alpha)^2)$.  At the simple root $\beta=-1/q$ of $Q_q$, one has $1/w=-Q_q'(\beta)(z-\beta)/P_p(\beta)+O((z-\beta)^2)$.  At $z=0$, the equation gives $w=-1+O(z)$, and at infinity it gives $w=-(p/q)z+O(1)$.  Every leading coefficient is separated from zero by the rational bounds above, so all end germs are smooth and remain separated throughout $\mathcal B$.

Finally, the coefficients of $1,z,w$ are already positive real.  If a scalar and torus translation made the entire equation real, then the normalized coefficients $p$ and $q$ would both be real.  Their imaginary intervals displayed above do not contain zero.  This affine phase obstruction is invariant under unimodular changes of exponent coordinates, and every simple Harnack curve is real.  Thus no curve in $\mathcal B$ is torically equivalent to a simple Harnack curve \cite{Mikhalkin2000,MR2001}.  This completes the proof of Theorem~\ref{thm:primitive}.

\begin{figure}[h]
\centering
\includegraphics[width=.56\textwidth]{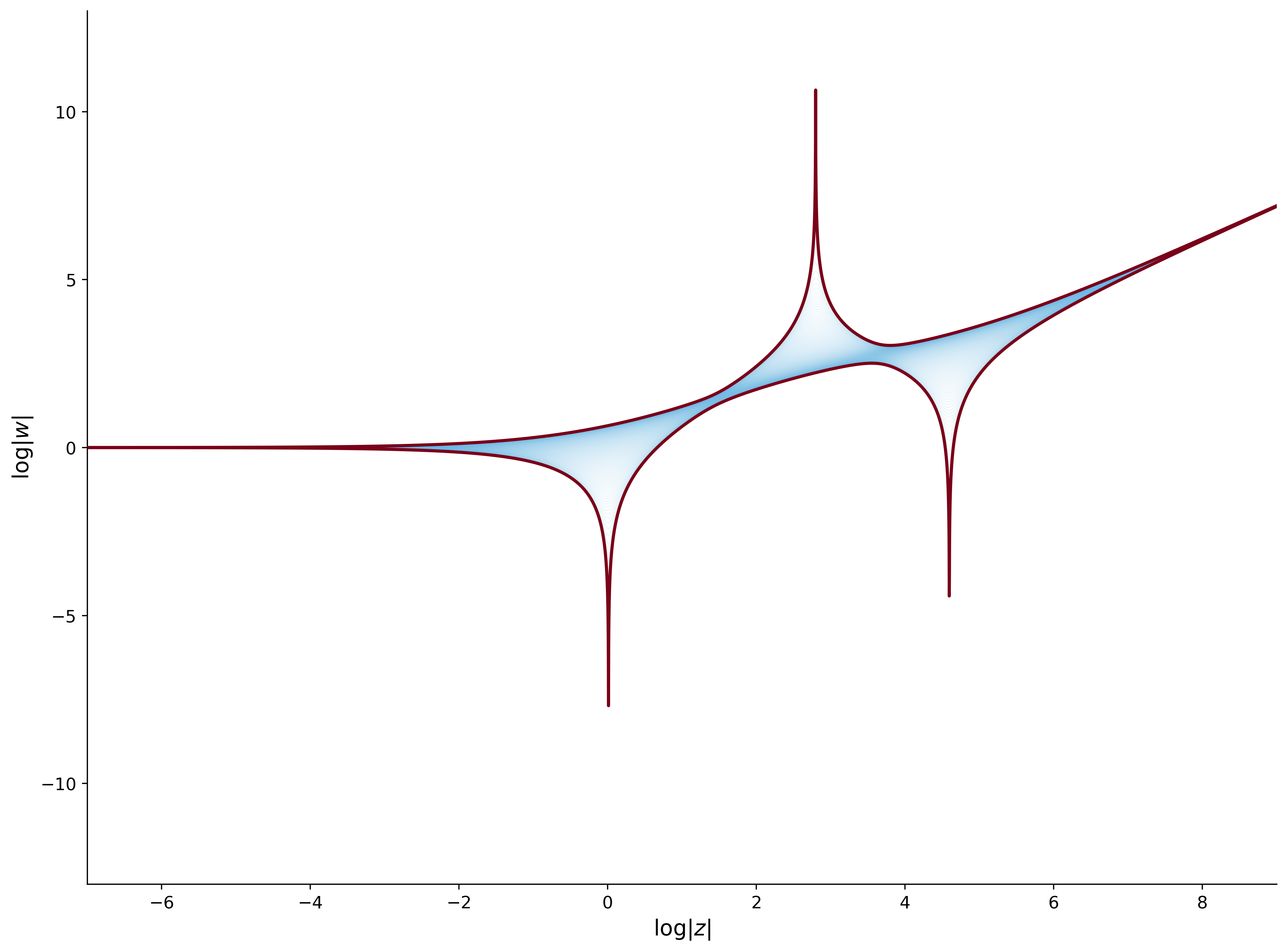}
\caption{The primitive-trapezoid amoeba in a non-flat sky-blue trace texture and its complete contour in dark red, drawn in an enlarged logarithmic window displaying all five tentacular directions.  The two contour branches are the slice minimum and slice maximum, and both are boundary-producing.}
\label{fig:primitive-amoeba}
\end{figure}

This theorem is complementary to the covering theorem rather than a consequence of it.  For another primitive polygon, one must again supply direct coefficient choices and complete critical-source, ramification, two-lift, chamber, and toric-end control.


\begin{theorem}\label{thm:combined}
For every two-dimensional lattice polygon $\Delta$, every smooth Newton-nondegenerate simple Harnack curve with polygon $\Delta$, and every nonunimodular integral matrix whose pullback is connected, the pulled-back curve is smooth, irreducible, and Newton nondegenerate, has a smooth embedded contour satisfying $\Cont(\A)=\partial\A$, and is not torically equivalent to a simple Harnack curve.  Consequently every $\Delta$ generates infinitely many integral-image Newton polygons admitting such curves.  In addition, the primitive trapezoid of Section~8 admits a direct open family with the same contour--boundary property.
\end{theorem}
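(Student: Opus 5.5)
The plan is to assemble the statement from three earlier results, Theorem~\ref{thm:general-cover}, Proposition~\ref{prop:end-test} and Theorem~\ref{thm:primitive}, plus one small explicit lattice construction.

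\textbf{First assertion.} Fix $\Delta$, a smooth Newton-nondegenerate simple Harnack curve $H$ with polygon $\Delta$, and an integral $A$ with $d=|\det A|>1$ and $\pi_A^{-1}(H)$ connected. I would apply Theorem~\ref{thm:general-cover} directly. The one hypothesis of that theorem not written into the present statement is that the contour of $H$ is a smooth embedded real-analytic curve, so I would derive it from Harnack theory.
\begin{itemize}
\item For a simple Harnack curve, $S_H$ is the real locus $\R H$ up to a torus translation, by \cite{Mikhalkin2000}.
\item $\Log|_{\R H}$ is an embedding (\cite{Mikhalkin2000}), and the Gauss map restricted to the real locus has no critical points.
\item Hence $\Cont(\A_H)=\Log(\R H)=\partial\A_H$ is a smooth embedded real-analytic curve.
\end{itemize}
With this hypothesis supplied, Theorem~\ref{thm:general-cover} gives smoothness, irreducibility, Newton nondegeneracy, the smooth embedded contour with $\Cont(\A)=\partial\A$, and the failure of toric equivalence to a simple Harnack curve.

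\textbf{Infinitely many polygons.} For each $\Delta$ I must produce infinitely many $A$ giving connected pullbacks and pairwise distinct polygons $A^{\mathsf T}\Delta$.
\begin{itemize}
\item Let $n$ be the primitive outward normal of some edge of $\Delta$, and complete it to a basis $(n,n')$ of $\Z^2$.
\item For each $d\ge2$, let $A_d$ have columns $n'$ and $dn$. Then $|\det A_d|=d$.
\item The lattice $N_\Delta+A_d\Z^2$ contains both $n$ and $n'$, so it equals $\Z^2$.
\item Proposition~\ref{prop:end-test} then shows $\pi_{A_d}^{-1}(H)$ is connected.
\item By Proposition~\ref{prop:fiber-area}, $\Area(A_d^{\mathsf T}\Delta)=d\,\Area(\Delta)$. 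These areas are distinct, so the polygons are pairwise non-equivalent even up to unimodular maps and translations. This gives infinitely many integral-image polygons.
\end{itemize}
Smooth simple Harnack curves exist for every $\Delta$ by patchworking, as noted after Theorem~\ref{thm:general-cover}, so the construction applies to every polygon.

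\textbf{Primitive trapezoid.} The "Section 8" in the statement should be read as Section~\ref{sec:primitive}. This part is Theorem~\ref{thm:primitive}. The family is open because the interior of the box $\mathcal B$ is a nonempty open subset of the $(p,q)$-parameter space $\C^2$. On that interior the curves are smooth, and their contour is smooth, embedded and equal to the amoeba boundary.

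\textbf{Main obstacle.} The only step that is not bookkeeping is the smoothness and embeddedness of the Harnack contour. I would take it from Mikhalkin's characterization, namely that the map $\Log$ is at most $2$-to-$1$ and is an embedding on the real ovals. The point to watch is the tentacles: the ends of $\R H$ at the toric divisors must give smooth ends. These are transverse by Newton nondegeneracy, so the ends should pose no difficulty.
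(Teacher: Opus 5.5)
Your proposal is correct and follows the paper's proof, which is also an assembly of Theorem~\ref{thm:general-cover}, the connectedness criteria (Theorem~\ref{thm:monodromy} and Proposition~\ref{prop:end-test}), and Theorem~\ref{thm:primitive}. You are more careful than the paper on two points it passes over: you derive the smooth-embedded-contour hypothesis of Theorem~\ref{thm:general-cover} (absent from the combined statement) from Mikhalkin's description of simple Harnack curves, and your explicit matrices $A_d$ with columns $n'$ and $dn$ certify connectedness of the pullbacks, whereas the paper only says that matrices of unbounded determinant give infinitely many polygon areas.
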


\begin{proof}
The covering assertion is Theorem~\ref{thm:general-cover}; the connectedness hypothesis is decided by Theorem~\ref{thm:monodromy} and may be certified by Proposition~\ref{prop:end-test}.  Taking matrices of unbounded determinant produces infinitely many polygon areas and therefore infinitely many unimodular equivalence classes.  The primitive assertion is the independently certified construction of \cite{NisseRigidity} described in Section~8.
\end{proof}

The theorem deliberately does not assert that every prescribed primitive polygon admits such a curve.  It gives a general infinite-family result from every starting polygon and one direct primitive open family.  A universal theorem for all primitive polygons would require a uniform direct construction beyond the monomial-covering method.

\subsection{Conclusion}

Contour--boundary equality is not a characterization of the simple-Harnack class.  A finite monomial cover preserves the exposed critical-value geometry but multiplies regular logarithmic sheets.  The normalized area ratio changes from one to $d^{-2}$, and the generic fiber cardinality changes from two to $2d$.  These independent invariants exclude toric Harnack equivalence.

For a prescribed Newton polygon, the exact arithmetic question is settled by $\kappa(\Delta)$ and the intermediate lattices containing $\Lambda_\Delta$.  For each quotient, connectedness is a separate monodromy question governed by $\Lambda_H+A\Z^2$.  Toric-end normals give a fast exact certificate whenever they already generate the deck group.  Primitive polygons lie outside the quotient mechanism and require direct phase constructions such as the trapezoid family.


\section{Example of a genus one curve}

We give an explicit irreducible curve $C\subset(\C^*)^2$ defined by a Laurent polynomial with five terms and Gaussian integer coefficients. It has genus $1$, and its Newton polygon is a quadrilateral without any pair of parallel sides, so it is neither a triangle nor a zonotope. The complement of its amoeba has exactly one bounded component, which is the number of interior lattice points of the polygon. The critical contour of the amoeba map is a smooth curve which, within the accuracy of our computation, coincides with the topological boundary of the amoeba. The curve is nevertheless not toric-equivalent to any simple Harnack curve, because two torus-invariant combinations of its coefficients are not real, and because the area of its amoeba is smaller than $\pi^2$ times the area of the Newton polygon. We display the amoeba in a speckled blue, its contour in crimson, and the Newton polygon, and we say precisely which statements are proved from the cited theorems and which are numerical.

\subsection{The curve and its Newton polygon}

Put $\varepsilon=6+8i$ and let $C\subset(\C^*)^2$ be the zero set of
$f(x,y)=x+y+x^{-1}+x^{-1}y^{-1}-\varepsilon$. Multiplying by $xy$ we obtain the polynomial $F(x,y)=x^2y+xy^2+y+1-\varepsilon\,xy$, which has the same zero set in the torus. The support of $f$ consists of $(1,0)$, $(0,1)$, $(-1,0)$, $(-1,-1)$ and $(0,0)$, hence the Newton polygon $\Delta$ is the convex hull of the first four points. Going around its boundary, the sides are the vectors $(-1,1)$, $(-1,-1)$, $(0,-1)$ and $(2,1)$. Each of them is primitive, the sum of the four is zero, and the cross product of two consecutive ones is $2$, $1$, $2$, $3$ in this cyclic order, so the four points are indeed vertices of a convex quadrilateral. This quadrilateral is not a triangle, and since no two of its sides are parallel it is not a trapezoid, a fortiori not a parallelogram and not centrally symmetric, hence not a zonotope. The shoelace formula gives $\Area(\Delta)=2$, its boundary contains exactly $4$ lattice points because every side is primitive, and Pick's formula gives $2-4/2+1=1$ interior lattice point, namely the origin. The polygon is drawn in Figure~\ref{fig:newton}; the exponents occurring in $f$ are the large red dots.

\begin{figure}[ht]
\centering
\includegraphics[width=0.25\textwidth]{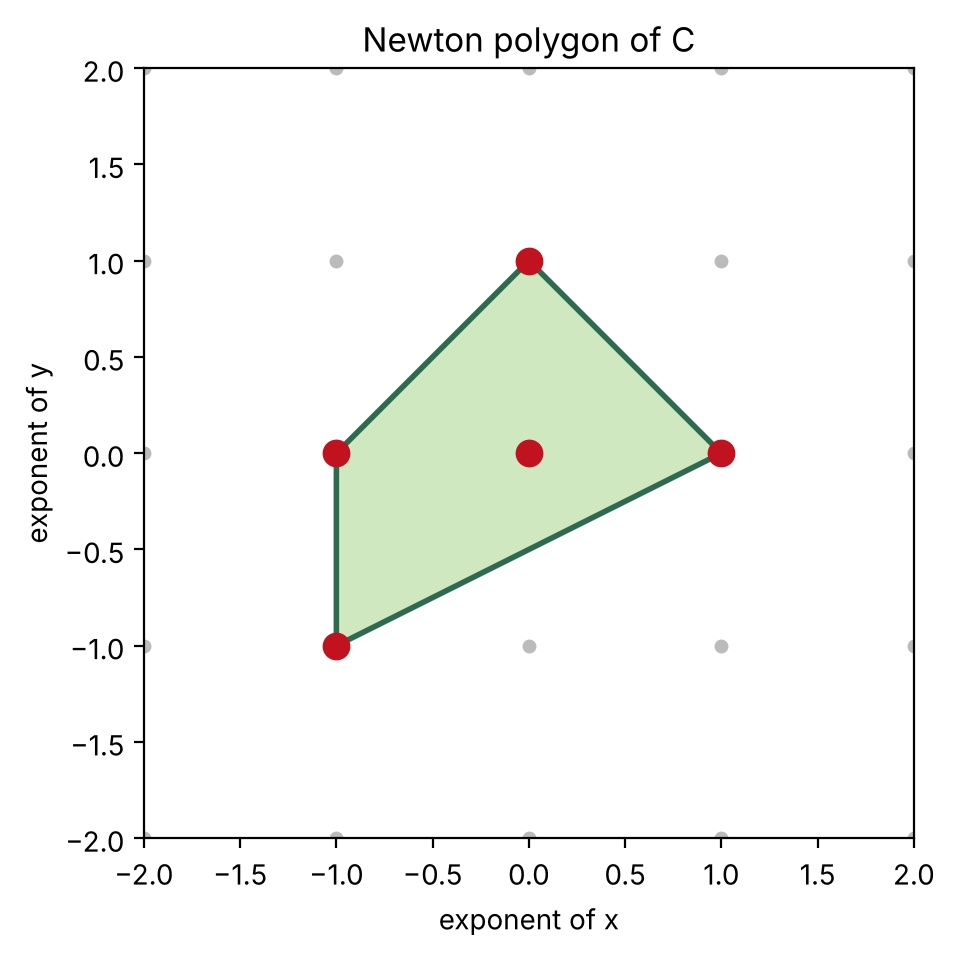}
\caption{The Newton polygon $\Delta$ of $f$: a quadrilateral with vertices $(1,0)$, $(0,1)$, $(-1,0)$, $(-1,-1)$ and the single interior lattice point $(0,0)$. Red dots are the exponents occurring in $f$, grey dots are the other lattice points.}
\label{fig:newton}
\end{figure}

\subsection{Smoothness, irreducibility and genus}

Regard $F=x\,y^2+(x^2-\varepsilon x+1)\,y+1$ as a quadratic polynomial in $y$ over $\C(x)$. Its discriminant is $D(x)=(x^2-\varepsilon x+1)^2-4x$, a polynomial of degree $4$ with $D(0)=1$. A numerical computation gives the four roots $6.50511+7.80945\,i$, $5.38987+8.38795\,i$, $0.05997-0.16254\,i$ and $0.04505-0.03486\,i$, whose mutual distances are at least $0.1285$; so $D$ has four simple roots. Over a value $x_0\in\C^*$ with $D(x_0)\ne0$ the curve has two distinct points, and at those points $\partial_yF\ne0$, so $C$ is smooth there. Over a root $x_0$ of $D$ there is one point $(x_0,y_0)$ with $y_0=-(x_0^2-\varepsilon x_0+1)/(2x_0)$ and $\partial_yF=0$ at it. If $\partial_xF$ vanished as well, $x_0$ would be a multiple root of the discriminant of a quadratic in $y$, which is impossible for a simple root; we also checked directly that $|\partial_xF|$ equals $3.22$, $3.17$, $19.2$ and $57.2$ at the four points. Hence $C$ is smooth in the torus.

If $F$ were reducible, the quadratic in $y$ would have a root in $\C(x)$, because its coefficients $x$, $x^2-\varepsilon x+1$ and $1$ have no common factor, and then $D$ would be a square in $\C(x)$; this is false since $D$ has simple roots. Hence $F$, and therefore $f$, is irreducible.

To compute the genus, put $w=2xy+x^2-\varepsilon x+1$. Then $w^2=D(x)$, and the map $(x,y)\mapsto(x,w)$ identifies $C$ with an open subset of the affine curve $w^2=D(x)$, whose smooth compactification is an elliptic curve because $D$ has degree $4$ and four simple roots (Riemann--Hurwitz, see \cite{Har77}, Chapter~IV, Proposition~2.4: $2g-2=2(-2)+4=0$). It remains to see that the compactification of $C$ does not acquire further branching over $x=0$ and $x=\infty$. Over $x=0$ the two roots of the quadratic are $y\to\infty$ and $y\to-1$, and over $x=\infty$ they are $y\sim-x$ and $y\sim-x^{-2}$; in both cases they are distinct points of $\mathbb{P}^1$, so the double cover is unramified there. Consequently $C$ has geometric genus $1$, which is the number of interior lattice points of $\Delta$. Each side of $\Delta$ carries a binomial with a single simple root, namely $x+y$, $y+1/x$, $x^{-1}+x^{-1}y^{-1}$ and $x+x^{-1}y^{-1}$ up to units, so the closure of $C$ in the toric surface of $\Delta$ is smooth and its genus is also given by \cite{Kho78}, which is consistent with the computation above.

\subsection{The complement of the amoeba has exactly one bounded component}

For $(u,v)\in\R^2$ put $S(u,v)=e^{u}+e^{v}+e^{-u}+e^{-u-v}$. If a point $(x,y)$ has $\Log(x,y)=(u,v)$, then the four monomials of $f$ other than the constant term have total modulus $S(u,v)$, and the constant term has modulus $|\varepsilon|=10$. Therefore, if $S(u,v)<10$, then $f(x,y)\ne0$ and $(u,v)\notin\mathcal A_C$. The function $S$ is convex, being a sum of exponentials of linear functions, and it tends to infinity at infinity because the origin is an interior point of $\Delta$ and the four exponents positively span the plane; its minimum is $3.7996$ (attained at $(u,v)\approx(0.3989,-0.1995)$), which is smaller than $10$. Hence $\{S<10\}$ is a nonempty bounded convex open set contained in the complement of $\mathcal A_C$. On it the constant term dominates, so it lies in the complement component of order $(0,0)$, which is therefore bounded by the facts recalled in Section~1. The four vertices of $\Delta$ give four unbounded components of other orders. Since the order map is injective with values in the $5$ lattice points of $\Delta$ \cite{FPT}, there are no other components, and we conclude that $\R^2\setminus\mathcal A_C$ has exactly five components, among which exactly one is bounded; this is the number of interior lattice points of $\Delta$.

The computation of the amoeba confirms this. For each $u$ the points of $C$ with $\log|x|=u$ are given by the two roots $y_1,y_2$ of the quadratic above with $x=e^{u+i\varphi}$, and the section of $\mathcal A_C$ at height $u$ is the union of the two intervals described by $\min(\log|y_1|,\log|y_2|)$ and $\max(\log|y_1|,\log|y_2|)$ when $\varphi$ runs over the circle, since these two functions are continuous in $\varphi$. We drew the amoeba on a grid of $1600\times1600$ pixels in the window $[-8,8]^2$ using $6000$ values of $\varphi$ for each column, giving each section at least one pixel. Labelling the connected components of the complement of the drawn set gave five components, four of which reach the border of the window and one of which is bounded. The hole comes very close to one of the unbounded components near $(2.3,-1.2)$, but a computation with $1201\times601$ points in $[2.25,2.37]\times[-1.35,-1.05]$ shows that the amoeba keeps a positive width there, with minimum horizontal width $0.0188$ attained at $\log|y|\approx-1.159$; as the components are distinct this neck must indeed have positive width.

\subsection{The contour and its coincidence with the boundary}

For a slope $\psi\in(0,\pi)$ write $s_1=\cos\psi$ and $s_2=\sin\psi$. A point of $C$ lies over a real value of the logarithmic Gauss map exactly when $s_1\,xF_x=s_2\,yF_y$ for some such $\psi$; indeed $xF_x$ and $yF_y$ are $xy$ times $x f_x$ and $y f_y$ on $C$. A direct computation gives $s_1xF_x-s_2yF_y=y\,[A_\psi(x)\,y+B_\psi(x)]$ with $A_\psi=(s_1-2s_2)\,x$ and $B_\psi=(2s_1-s_2)x^2-(s_1-s_2)\,\varepsilon x-s_2$. Since $y\ne0$, the critical points above the slope $\psi$ are the points with $y=-B_\psi(x)/A_\psi(x)$ for which $F(x,y)=0$, and substituting we get the polynomial equation $x\,B_\psi^2-(x^2-\varepsilon x+1)\,A_\psi B_\psi+A_\psi^2=0$ in the single variable $x$. For each of $60000$ equally spaced values of $\psi$ in $(0,\pi)$ it has exactly $4$ nonzero roots, which is the degree $2\Area(\Delta)=4$ of the logarithmic Gauss map \cite{M00}. We followed the four roots from one value of $\psi$ to the next by an optimal matching of consecutive root sets, which produces four continuous curves; their images under $\Log$ form the contour. These curves are drawn in crimson in Figure~\ref{fig:amoeba} on top of the amoeba, which is shaded in a speckled blue: the shade is a gradient from pale sky blue near the boundary to deep navy deeper inside, blended with random pixel-scale noise, and it is decorative only, carrying no mathematical meaning. Figure~\ref{fig:zoom} shows an enlargement around the hole.

\begin{figure}[ht]
\centering
\includegraphics[width=0.26\textwidth]{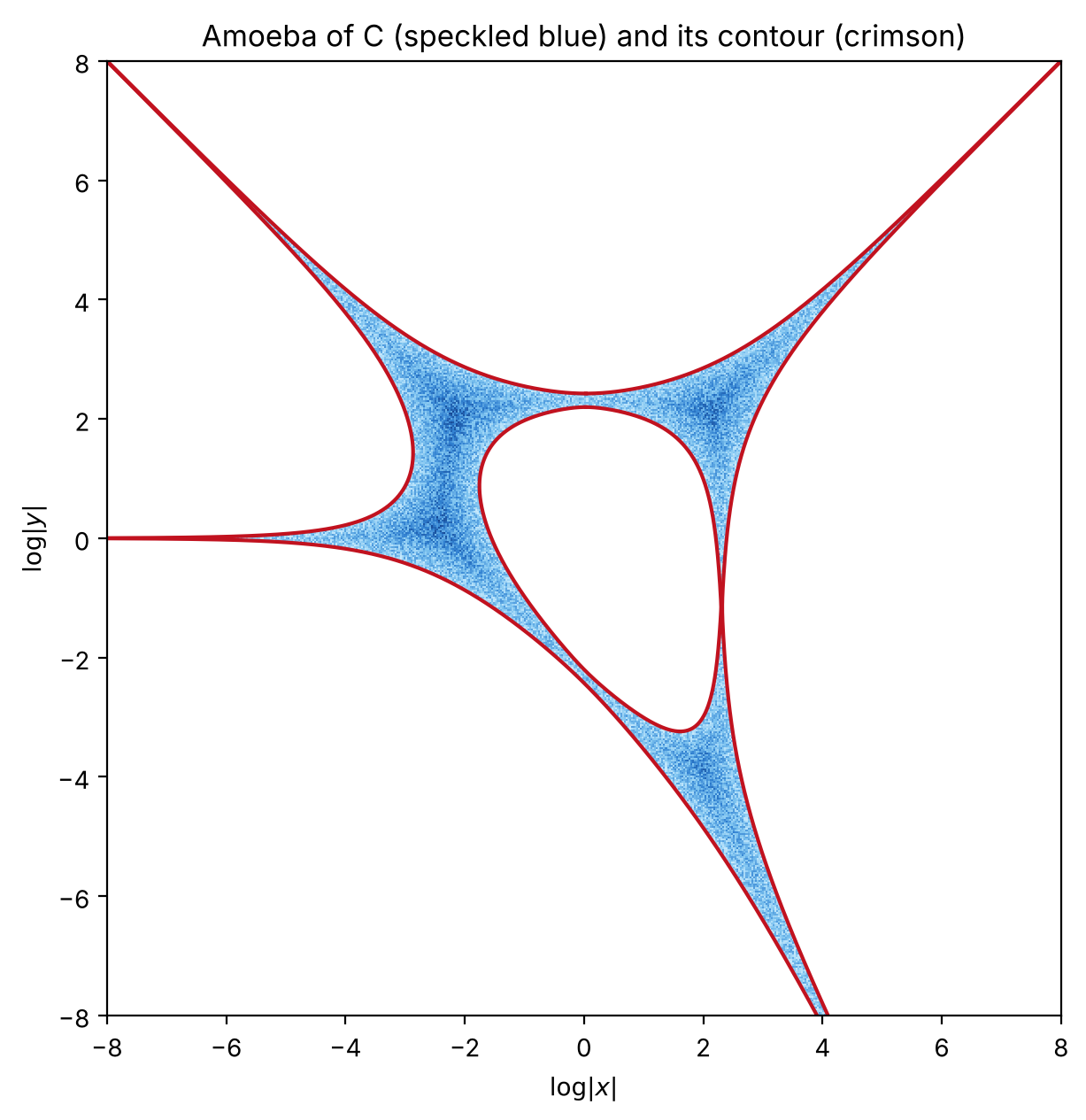} \qquad   \includegraphics[width=0.26\textwidth]{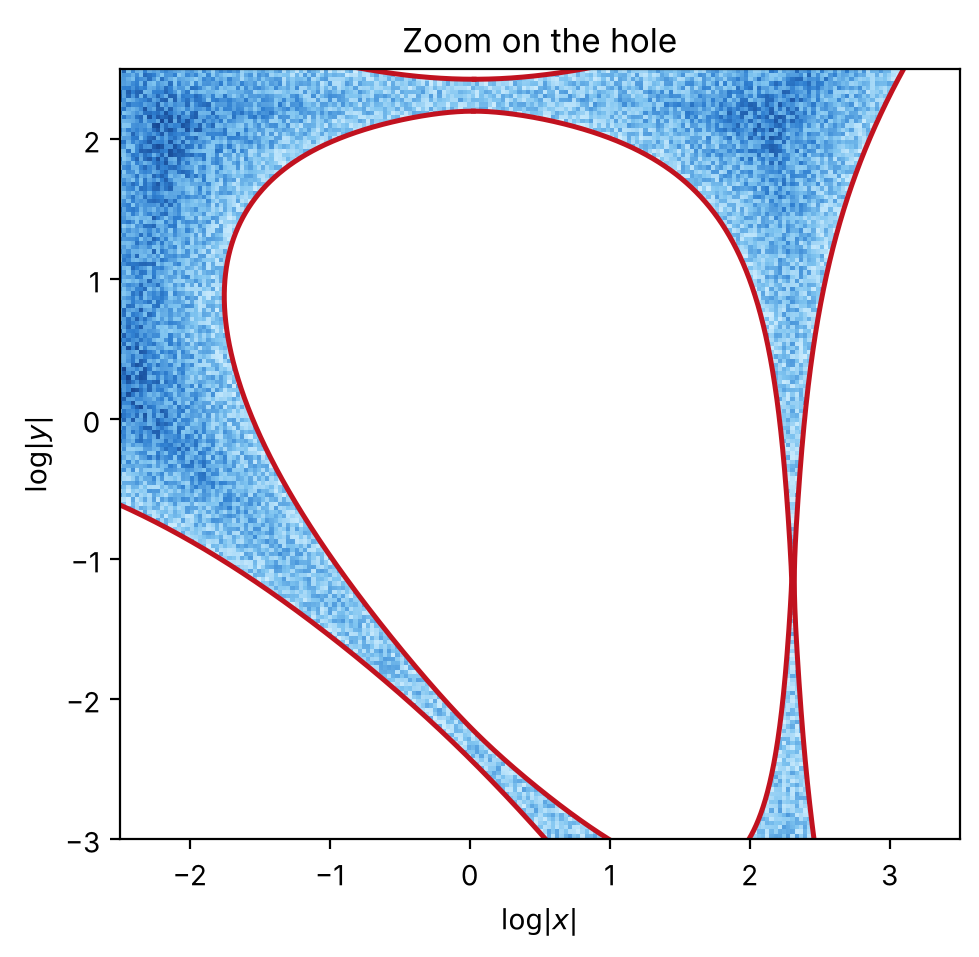}
\caption{The amoeba of $C$ (speckled blue) with its contour (crimson) in the window $[-8,8]^2$. There are four tentacles, in the directions $(1,1)$, $(-1,1)$, $(-1,0)$ and $(1,-2)$ of the outer normals of $\Delta$, and one hole. On the right, enlargement around the hole; the thin neck of the amoeba near $(2.3,-1.2)$ has width about $0.019$.}
\label{fig:amoeba}
\end{figure}

We then tested numerically that the contour has no part inside the amoeba. For each of the $238137$ points of the four crimson curves lying in the window $[-7.8,7.8]^2$ we computed its distance to the complement of the amoeba using the pixel image; the largest value is $0.02$, that is two pixels, so that every contour point lies on the boundary at the resolution of the picture. Conversely every boundary pixel is within $0.0142$ of a contour point. The inclusion of the boundary in the contour is a general fact, so the two sets agree in the window up to the resolution. The contour is smooth in the sense that the four curves have no cusp: the largest turning angle between consecutive segments of the polygonal approximations inside the window is $0.125$ radians. Where the curves leave the window the successive tangent directions reverse, with turning angle $\pi$, at the three slopes $\psi=\pi/4$, $\psi=\arctan2$ and $\psi=3\pi/4$, at points of $\Log$-norm larger than $10$; at these slopes a root goes to $0$ or $\infty$ along a tentacle, with $\Log$-image running out along one side of the tentacle and coming back along the other side, so that no cusp is involved. Finally, the curve deforms continuously to the real curve $x+y+x^{-1}+x^{-1}y^{-1}-10$ by rotating the phase of the constant term. For that real curve the area of the amoeba computed by the same method is $19.741$, equal to $2\pi^2\approx19.739$ within the error, so it behaves like a Harnack curve for which the contour is the real part and equals the boundary \cite{Mikhalkin2000}; for phases $0.3$, $0.6$, $0.9$ and $1.2$ of the constant term $-10\,e^{i\theta}$ the areas are $19.494$, $18.926$, $18.166$ and $17.537$, decreasing as the curve moves away from the real one. This suggests that the coincidence of contour and boundary persists under the deformation, but we stress that our verification of it for the curve $C$ is a numerical one, not a proof.

\subsection{Why $C$ is not toric-equivalent to a simple Harnack curve}

The exponents $(1,0)$, $(-1,0)$ and $(0,0)$ satisfy the affine relation $(1,0)+(-1,0)-2\,(0,0)=0$, and the exponents $(0,1)$, $(-1,-1)$, $(-1,0)$, $(0,0)$ satisfy $(0,1)+(-1,-1)-(-1,0)-(0,0)=0$. Let $c_\alpha$ denote the coefficient of $x^{\alpha_1}y^{\alpha_2}$ in $f$, so that $c_{(1,0)}=c_{(0,1)}=c_{(-1,0)}=c_{(-1,-1)}=1$ and $c_{(0,0)}=-\varepsilon$. The first quantity is $I_1=c_{(1,0)}c_{(-1,0)}/c_{(0,0)}^{2}=1/\varepsilon^{2}=-0.0028-0.0096\,i$ and the second is $I_2=c_{(0,1)}c_{(-1,-1)}/(c_{(-1,0)}c_{(0,0)})=-1/\varepsilon=-0.06+0.08\,i$. Both do not change when $f$ is multiplied by a constant $\kappa$ or by a monomial, nor when $x$ and $y$ are replaced by $\lambda x$ and $\mu y$: the scalar $\kappa$ occurs to the same total power in the numerator and in the denominator, and the factors $\lambda^{\alpha_1}\mu^{\alpha_2}$ cancel exactly because the corresponding exponents satisfy the affine relations above. A change of coordinates with matrix in $GL_2(\Z)$ transports the support of $f$ onto the support of the new polynomial and the affine relations onto affine relations, so such products of coefficients computed for the transported relations are unchanged as well.

Suppose now that $\Phi$ is a monomial change of coordinates carrying $C$ onto a simple Harnack curve $C'$. By definition \cite{Mikhalkin2000} the curve $C'$ is real, that is, defined by a polynomial $g$ with real coefficients. The Laurent polynomial $g\circ\Phi$ vanishes exactly on $C$, which is irreducible by Section~3, so that $g\circ\Phi$ is $\kappa$ times a monomial times $f$; in particular $g$ has five terms, its support corresponds to the support of $f$, and the corresponding quantities $I_1$ and $I_2$ computed from the coefficients of $g$ are real and equal to the ones computed from $f$. But $I_2=-0.06+0.08\,i$ is not real, a contradiction. Hence $C$ is not toric-equivalent to a simple Harnack curve, and the argument covers equivalences with arbitrary complex translations of the torus.

The area gives an independent confirmation. The area of $\mathcal A_C$, computed by integrating over $u\in[-40,40]$ the total length of the section described in Section~4, is $18.110$, and it is the same, up to $10^{-4}$, for the window $[-25,25]$. The maximal value for this polygon is $\pi^2\Area(\Delta)=2\pi^2\approx19.739$ by \cite{PassareRullgard}, so $\mathcal A_C$ fills about $91.7\%$ of it. By \cite{MR2001} a curve with this Newton polygon and this amoeba cannot be Harnack, and since both areas are preserved by toric equivalence it cannot be toric-equivalent to a Harnack curve either.


\section{Example of genus three curve}

In this section,
we give an explicit irreducible curve $C\subset(\C^*)^2$ defined by a polynomial with seven terms and Gaussian integer coefficients. Its Newton polygon is the quadrilateral with vertices $(0,1)$, $(1,0)$, $(4,1)$, $(2,2)$, which has no pair of parallel sides, so it is neither a triangle nor a zonotope, and it has exactly three interior lattice points, $(1,1)$, $(2,1)$ and $(3,1)$. The curve is smooth in the torus and has genus $3$. The complement of its amoeba has exactly seven components, four unbounded ones and three bounded ones, and the three bounded components have orders $(1,1)$, $(2,1)$ and $(3,1)$, so their number equals the number of interior lattice points. The critical contour of the amoeba map is a smooth curve, without cusps, which in our computation coincides with the topological boundary of the amoeba. The curve is nevertheless not toric-equivalent to a simple Harnack curve, because two torus-invariant combinations of its coefficients are not real, and because the area of its amoeba, about $32.35$, is smaller than $\pi^2$ times the area $4$ of the Newton polygon, which is about $39.48$. We display the amoeba in a speckled blue, its contour in crimson, both together, and the Newton polygon, and we say precisely which statements are proved from the cited theorems and exact computations and which are numerical.
 

\begin{theorem}\label{thm:main}
Let $P(x)=\tfrac14+16i\,x+(11-63i)\,x^2+16\,x^3-\tfrac14\,x^4$, let $G(x,y)=x+x^2y^2+y\,P(x)$ and let $C=\{G=0\}\subset(\C^*)^2$. Then the following hold. 
\begin{itemize}
\item[(i)]  The polynomial $G$ is irreducible, $C$ is smooth, and $C$ has genus $3$;
\item[(ii)]   the Newton polygon $\Delta$ of $G$ is the quadrilateral with vertices $(0,1)$, $(1,0)$, $(4,1)$, $(2,2)$; it has no pair of parallel sides, so it is neither a triangle nor a zonotope, and its interior lattice points are exactly $(1,1)$, $(2,1)$ and $(3,1)$; 
\item[(iii)]   the complement of the amoeba $\mathcal A_C$ has exactly seven connected components, four unbounded and three bounded, and the three bounded ones have orders $(1,1)$, $(2,1)$ and $(3,1)$; in particular their number equals the number of interior lattice points of $\Delta$; 
\item[(iv)]   the curve $C$ is not toric-equivalent to any simple Harnack curve, and $\Area(\mathcal A_C)<\pi^2\Area(\Delta)=4\pi^2$;
\item[(V)]   the critical contour of $\Log|_C$ is a smooth curve without cusps which coincides with the topological boundary of $\mathcal A_C$.
\end{itemize}
\end{theorem}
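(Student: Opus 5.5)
The plan follows the template of the genus-one section, treating $G$ as a quadratic in $y$:
$$
G=x^2\,y^2+P(x)\,y+x .
$$
Parts (i)--(iv) will be proved by exact algebra together with rigorous lopsidedness inequalities. Part (v) I expect can only be supported numerically, and I will say so explicitly.

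\emph{Part (ii).} The support of $G$ is $(1,0)$, $(2,2)$ and $(k,1)$ for $0\le k\le4$. The hull has vertices $(0,1),(1,0),(4,1),(2,2)$, with side vectors $(1,-1)$, $(3,1)$, $(-2,1)$, $(-2,-1)$. All four are primitive, and no two are parallel. The shoelace formula gives $\Area(\Delta)=4$ with $4$ boundary lattice points, so Pick's formula gives $3$ interior points. These are visibly $(1,1),(2,1),(3,1)$.

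\emph{Part (i).} The discriminant in $y$ is $D(x)=P(x)^2-4x^3$, of degree $8$ with $D(0)=1/16\neq0$.
\begin{itemize}
\item I will certify exactly that $D$ is squarefree, via $\gcd(D,D')=1$ over $\mathbb Q(i)$ or a nonzero discriminant of $D$.
\item Irreducibility: the coefficients $x^2,P,x$ have no common factor since $P(0)\neq0$, and $D$ is not a square because its roots are simple. Hence $G$ has no root in $\C(x)$, exactly as in the genus-one case.
\item Smoothness: at a root $x_0$ of $D$ one has $y_0=-P(x_0)/(2x_0^2)$. A singular point would make $x_0$ a multiple root of $D$, which is excluded.
\item Genus: set $w=2x^2y+P$, so $w^2=D$. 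This is a hyperelliptic curve branched at $8$ points, so $g=3$ by Riemann--Hurwitz, provided there is no extra branching over $0$ and $\infty$. Over $x=0$ the two branches are $y\sim -4x$ and $y\sim -1/(4x^2)$. Over $x=\infty$ they are $y\sim x^2/4$ and $y\sim 4/x^3$. These are distinct in both cases.
\item As a cross-check, the edge binomials have simple roots, so Khovanskii \cite{Kho78} also gives genus $3$.
\end{itemize}

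\emph{Part (iii).} I would exhibit, for each interior order $(k,1)$, an explicit point $(u,v)=(\log t,\log s)$ where that monomial strictly dominates the sum of the moduli of all others. The relevant moduli are $t$, $t^2s^2$ and $s\cdot\{1/4,\,16t,\,|11-63i|t^2,\,16t^3,\,t^4/4\}$, with $|11-63i|\approx63.95$.
\begin{itemize}
\item For $(1,1)$, try $t=1/16$, $s\approx1$: the $16t$ term is $1$, the other $y$-terms sum to about $0.5$, and $t+t^2s^2<0.07$.
\item For $(3,1)$, try $t=16$ with a matching small $s$. This case is a rescaled mirror of $(1,1)$, since $P$ is nearly palindromic up to the middle coefficient.
\item For $(2,1)$, $t=1$ and $s=1$ already give $63.95>32.5+1+1$.
\end{itemize}
Each inequality is a finite rational check. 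Lopsidedness places each such point in the complement component of the corresponding order. Components of interior orders are bounded. The four vertices give four unbounded components. Injectivity of the order map into the $7$ lattice points of $\Delta$ \cite{FPT} then gives exactly seven components.

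\emph{Part (iv).} I use the affine relation $(0,1)+(2,1)-2(1,1)=0$. It gives the invariant
$$
\frac{c_{(0,1)}\,c_{(2,1)}}{c_{(1,1)}^2}=\frac{\tfrac14\,(11-63i)}{(16i)^2}=-\frac{11-63i}{1024},
$$
which is nonreal. A second relation, such as $(2,1)+(4,1)-2(3,1)=0$, gives another nonreal invariant. By the same argument as in the genus-one section (irreducibility forces $g\circ\Phi$ to be a scalar times a monomial times $G$), $C$ is not toric-equivalent to a real curve, hence not to a simple Harnack curve. The strict area inequality then follows from \cite{MR2001}: equality would force Harnack type up to normalization. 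The value $\approx32.35$ is offered only as numerical confirmation.

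\emph{Part (v) and the main obstacle.} I would follow the genus-one method. Write the real-slope critical condition $s_1xG_x=s_2yG_y$ as $y=-B_\psi(x)/A_\psi(x)$, substitute into $G$, and track the $2\Area(\Delta)=8$ roots in $\psi$. Then map the tracked curves by $\Log$ and compare them with the pixel boundary. Certifying contour--boundary equality rigorously is the main obstacle, especially along the very narrow amoeba strips between the holes. A genuine proof would need an angular-dominance certificate like the one for the primitive trapezoid, and the curve is not linear in $y$, so that method does not directly apply. I therefore expect to present (v) as numerically verified at stated resolution, with smoothness read off from bounded turning angles and tentacle reversals explained as in the genus-one case.
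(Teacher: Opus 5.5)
Your proposal is correct, and for (i), (ii), (iii) and (v) it follows the paper's proof closely. The shared steps are:
\begin{itemize}
\item exact squarefreeness of $D=P^2-4x^3$;
\item the discriminant argument for smoothness and irreducibility;
\item Riemann--Hurwitz for $w^2=D$, with the Khovanskii cross-check;
\item lopsidedness at three points, combined with injectivity of the order map;
\item an explicitly numerical treatment of the contour.
\end{itemize}
Your lopsided points and your affine relation $(0,1)+(2,1)-2(1,1)=0$ differ from the paper's points $A,B,C_0$ and invariants $I_1,I_2$, but they are equally valid.

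One caution concerns the order $(3,1)$. The moduli are not genuinely mirror-symmetric, because the terms $x$ and $x^2y^2$ break the symmetry $k\leftrightarrow 4-k$. The literal mirror of $(1/16,1)$, namely $(t,s)=(16,16^{-4})$, fails: there the monomial $x$, of modulus $16$, dominates. At $t=16$, any $s$ with roughly $5\cdot10^{-4}<s<127$ works. For instance, $s=1$ gives $65536>16+256+\tfrac14+256+256\sqrt{4090}+16384$. In (v), note also that $s_1xG_x-s_2yG_y$ is quadratic in $y$ and must first be reduced modulo $G$. The slope $\tan\psi=1/2$ then degenerates and needs separate handling, as the paper does.

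The genuine difference lies in the area inequality of (iv).
\begin{itemize}
\item The paper computes $\Area(\mathcal A_C)\approx 32.35$ by numerically integrating section lengths. It then uses the gap to $4\pi^2$, via \cite{MR2001}, as an obstruction to Harnack type that does not depend on phases. Its Remark concedes that this value is the one uncertified ingredient of (i)--(iv).
\item You run \cite{MR2001} in the opposite direction. Equality $\Area(\mathcal A_C)=\pi^2\Area(\Delta)$ would force $G$ to become real after a torus translation and multiplication by a constant. Your invariant $-(11-63i)/1024$ is unchanged by these operations and is not real. Hence the general bound $\Area(\mathcal A_C)\le\pi^2\Area(\Delta)$ is strict.
\end{itemize}
Your route makes all of (iv) rigorous. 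The price is losing the quantitative information (about $82\%$ of the maximal area) and the independence of the two obstructions. You also rightly exclude toric equivalence to \emph{any} real curve, not merely to smooth simple Harnack curves, which is exactly the input the equality case requires.
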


\begin{proof}
The first and second assertions are proved in Sub-sections~\ref{sec:curve} and~\ref{sec:genus} (the Newton polygon by Pick's formula, smoothness and irreducibility from the discriminant, whose eight roots are simple by an exact resultant computation, and the genus by Riemann--Hurwitz and Khovanskii's theorem \cite{Kho78}). The third assertion is proved in Sun-section~\ref{sec:holes} from the dominance of a single monomial at three explicit points and from the injectivity of the order map \cite{FPT,PassareRullgard}. The first part of the fourth assertion is proved in Section~\ref{sec:harnack} with two non-real torus-invariant products of coefficients, and the area inequality is established there by an integration of the sections of the amoeba, whose numerical value is about $32.35$. The fifth assertion is established in Section~\ref{sec:contour} by a numerical computation of the contour and of the boundary at a resolution of $0.01$.
\end{proof}

\begin{remark}
The first four assertions follow from the cited theorems and from exact or certified computations, with the single exception of the numerical value of the area, whose inequality has a wide margin ($32.35$ against $39.48$). The fifth assertion is the only one that rests on a numerical experiment alone; it is stated as part of the theorem because it is what the computation of Section~\ref{sec:contour} shows, and Section~\ref{sec:contour} also shows that it depends on the choice of the coefficients.
\end{remark}

\subsection{The curve and its Newton polygon}\label{sec:curve}

Put $P(x)=\tfrac14+16i\,x+(11-63i)\,x^2+16\,x^3-\tfrac14\,x^4$ and let $C\subset(\C^*)^2$ be the zero set of
$G(x,y)=x+x^2y^2+y\,P(x).$
Multiplying by $4$ we obtain the polynomial with Gaussian integer coefficients
$F(x,y)=4x+4x^2y^2+\bigl(1+64i\,x+(44-252i)\,x^2+64\,x^3-x^4\bigr)\,y,$
which has the same zero set. The support of $F$ consists of the seven exponents $(1,0)$, $(2,2)$, $(0,1)$, $(1,1)$, $(2,1)$, $(3,1)$ and $(4,1)$, and the Newton polygon $\Delta$ is the convex hull of $(0,1)$, $(1,0)$, $(4,1)$ and $(2,2)$. Going around its boundary counterclockwise, the sides are the vectors $(1,-1)$, $(3,1)$, $(-2,1)$ and $(-2,-1)$. Each of them is primitive, their sum is zero, and the cross products of consecutive sides are $4$, $5$, $4$ and $3$, all positive, so the four points are the vertices of a convex quadrilateral. This quadrilateral is not a triangle, and since no two of its sides are parallel it is not a trapezoid, a fortiori not a parallelogram and not centrally symmetric, hence not a zonotope. The shoelace formula gives $\Area(\Delta)=4$, the boundary contains exactly $4$ lattice points because every side is primitive, and Pick's formula gives $4-4/2+1=3$ interior lattice points, namely $(1,1)$, $(2,1)$ and $(3,1)$, which all occur in $F$. The polygon is drawn in Figure~\ref{fig:newton}; the vertices are the red dots and the three interior points are the dark blue dots.

\begin{figure}[ht]
\centering
\includegraphics[width=0.29\textwidth]{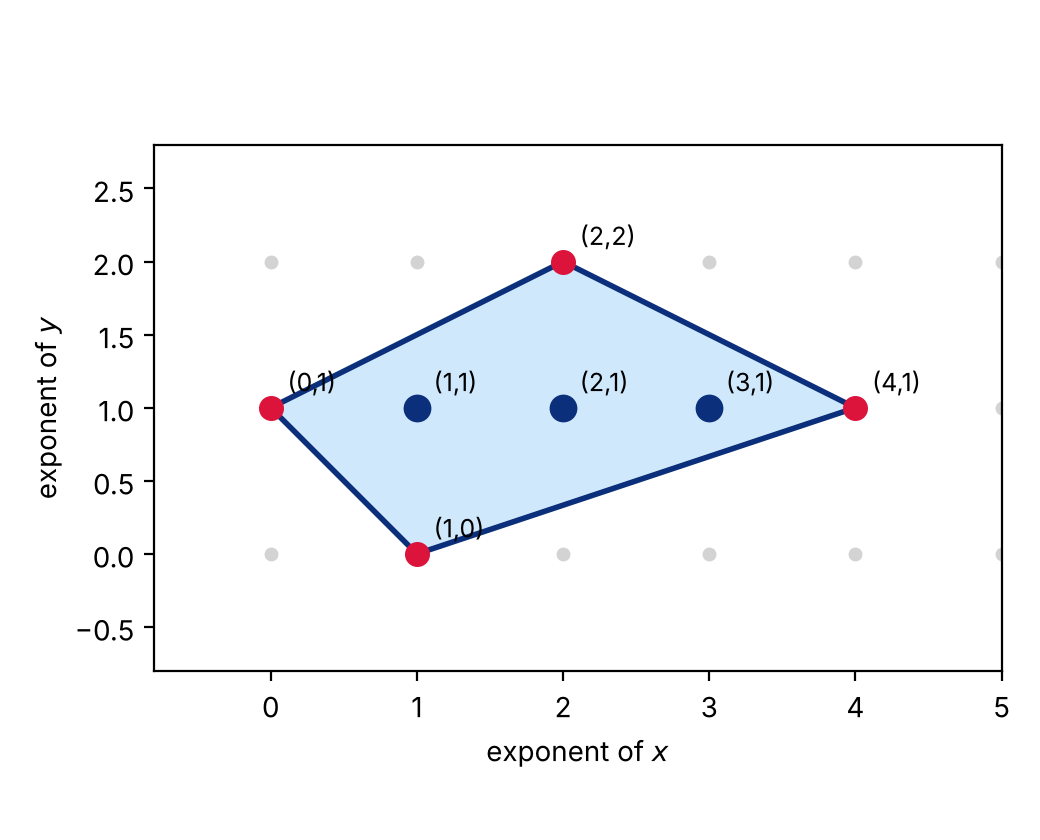}
\caption{The Newton polygon $\Delta$ of $F$: a quadrilateral with vertices $(0,1)$, $(1,0)$, $(4,1)$, $(2,2)$ and the three interior lattice points $(1,1)$, $(2,1)$, $(3,1)$ (dark blue). Grey dots are the other points of the lattice.}
\label{fig:newton}
\end{figure}

\subsection{Smoothness, irreducibility and genus}\label{sec:genus}

Regard $G=x^2\,y^2+P(x)\,y+x$ as a quadratic polynomial in $y$ over $\C(x)$. Its discriminant is $D(x)=P(x)^2-4x^3$, a polynomial of degree $8$ with leading coefficient $1/16$ and $D(0)=1/16$. The eight roots of $D$ are simple. We checked this exactly: the coefficients of $D$ and of $D'$ are Gaussian rationals, and the resultant of $D$ and $D'$, computed by Gaussian elimination with exact rational arithmetic on the Sylvester matrix of size $15$, is $-7.3232\cdot10^{15}-1.4044\cdot10^{15}\,i$ up to rounding, hence different from $0$. The numerical values of the roots, together with the point $(x_0,y_0)$ of $C$ above each of them, are given in Table~\ref{tab:roots}; they come in four nearby pairs, which reflects the strong separation of the sizes of the coefficients, and the smallest distance inside a pair is $4.6\cdot10^{-4}$.

\begin{table}[ht]
\centering
\small
\renewcommand{\arraystretch}{1.25}
\begin{tabular}{@{}lll@{}}
\hline
root $x_0$ of $D$ & $y_0=-P(x_0)/(2x_0^2)$ & $|\partial_xG(x_0,y_0)|$ \\
\hline
$64.9161-3.8125\,i$ & $0.12395+0.00364\,i$ & $8689$ \\
$64.8865-3.8169\,i$ & $-0.12398-0.00364\,i$ & $8673$ \\
$-1.1757+3.9132\,i$ & $0.29523-0.39696\,i$ & $142.6$ \\
$-1.1001+3.8249\,i$ & $-0.30150+0.40044\,i$ & $135.0$ \\
$0.24265-0.05625\,i$ & $-1.99069-0.22771\,i$ & $33.97$ \\
$0.23240-0.08318\,i$ & $1.98313+0.34422\,i$ & $34.80$ \\
$-0.00077+0.01555\,i$ & $-5.52502+5.80576\,i$ & $130.3$ \\
$-0.00102+0.01516\,i$ & $5.53888-5.92574\,i$ & $136.8$ \\
\hline
\end{tabular}
\caption{The eight branch points of the projection $(x,y)\mapsto x$ on $C$, with the value of $\partial_xG$ at the corresponding point of $C$.}
\label{tab:roots}
\end{table}

Over a value $x_0\in\C^*$ with $D(x_0)\ne0$ the curve has two distinct points, and at those points $\partial_yG\neq0$, so $C$ is smooth there. Over a root $x_0$ of $D$ there is one point $(x_0,y_0)$, with $y_0=-P(x_0)/(2x_0^2)$, at which $\partial_yG=0$. If $\partial_xG$ vanished there as well, $x_0$ would be a multiple root of the discriminant of a quadratic in $y$, which is impossible for a simple root; Table~\ref{tab:roots} shows directly that $|\partial_xG|$ is between $33$ and $8700$ at the eight points. Hence $C$ is smooth in the torus.

If $G$ were reducible, the quadratic in $y$ would have a root in $\C(x)$, because its coefficients $x^2$, $P(x)$ and $x$ have no common factor, and then $D$ would be a square in $\C(x)$; this is false since $D$ has simple roots. Hence $G$, and therefore $F$, is irreducible.

To compute the genus, put $w=2x^2y+P(x)$. On $C$ we have $4x^2G=0$, that is $w^2=P^2-4x^3=D(x)$, and the map $(x,y)\mapsto(x,w)$ identifies $C$ with an open subset of the affine curve $w^2=D(x)$. Its smooth compactification is a hyperelliptic curve, which by Riemann--Hurwitz (see \cite{Har77}, Chapter~IV, Proposition~2.4) has genus $g$ with $2g-2=2\cdot(-2)+8$, that is $g=3$, because $D$ has degree $8$ and eight simple roots. It remains to see that the compactification of $C$ does not acquire further branching over $x=0$ and $x=\infty$. Over $x=0$ the two roots of the quadratic satisfy $y\sim-x/P(0)=-4x\to0$ and $y\sim-P(0)/x^2=-1/(4x^2)\to\infty$, and over $x=\infty$ they satisfy $y\sim x^2/4$ and $y\sim4/x^3$; in both cases they are distinct points of $\mathbb{P}^1$, so the double cover is unramified there. Consequently $C$ has geometric genus $3$, which is the number of interior lattice points of $\Delta$. Each side of $\Delta$ is primitive, so it carries only its two end points, and the polynomial formed by the two corresponding monomials of $G$ is a binomial, which has a single simple root in $\C^*$; so the closure of $C$ in the toric surface of $\Delta$ is smooth and its genus is also given by \cite{Kho78}, in agreement with the computation above.

\subsection{The complement of the amoeba has exactly three bounded components}\label{sec:holes}

Let $A=(-2.78,\,0.85)$, $B=(0,\,0.5)$ and $C_0=(2.8,\,-2.0)$ be three points of the $(u,v)$-plane. For each of them we computed the moduli of the seven terms of $G$ on the torus $\{\Log(x,y)=(u,v)\}$; the moduli of $x$, $x^2y^2$, $p_0y$, $p_1xy$, $p_2x^2y$, $p_3x^3y$ and $p_4x^4y$ are $e^u$, $e^{2u+2v}$, $|p_k|e^{ku+v}$ for $k=0,\dots,4$, with $|p_0|=|p_4|=1/4$, $|p_1|=|p_3|=16$ and $|p_2|=\sqrt{11^2+63^2}=63.96$. They are collected in Table~\ref{tab:dom}.

\begin{table}[ht]
\centering
\scriptsize
\setlength{\tabcolsep}{3pt}
\renewcommand{\arraystretch}{1.25}
\begin{tabular}{@{}lccccccc@{}}
\hline
point & $x$ & $x^2y^2$ & $p_0y$ & $p_1xy$ & $p_2x^2y$ & $p_3x^3y$ & $p_4x^4y$ \\
\hline
$A=(-2.78,0.85)$ & $0.0620$ & $0.0211$ & $0.5849$ & $\mathbf{2.3224}$ & $0.5759$ & $0.0089$ & $9\cdot10^{-6}$ \\
$B=(0,0.5)$ & $1$ & $2.718$ & $0.4122$ & $26.38$ & $\mathbf{105.44}$ & $26.38$ & $0.4122$ \\
$C_0=(2.8,-2.0)$ & $16.44$ & $4.953$ & $0.0338$ & $35.61$ & $2341$ & $\mathbf{9630}$ & $2474$ \\
\hline
\end{tabular}
\caption{Moduli of the seven terms of $G$ at three points of the complement; the dominant term is in bold. The sum of the other six moduli is $1.2528$, $57.302$ and $4871.9$ respectively, that is $54\%$, $54\%$ and $51\%$ of the dominant one.}
\label{tab:dom}
\end{table}

At each of these points one term has modulus larger than the sum of the other six. Therefore $G$ has no zero on the corresponding torus, the three points belong to the complement of $\mathcal A_C$, and by Section~1 they lie in components whose orders are the exponents of the dominant terms, namely $(1,1)$, $(2,1)$ and $(3,1)$. We also computed the average of $(xG_x/G,\,yG_y/G)$ over the torus with a grid of $600\times600$ points, a computation that converges exponentially because the integrand is analytic and $|G|$ is bounded away from $0$ (the minimum of $|G|$ on a grid of $2000\times2000$ points is $1.747$, $83.84$ and $7499$, and the Lipschitz constants of $G$ with respect to the two arguments, multiplied by half the grid spacing $0.0031$, are smaller than $0.012$, $0.77$ and $92$ respectively, so $|G|\geq1.735$, $83.07$ and $7408$ on the whole tori); the results are $(1,1)$, $(2,1)$ and $(3,1)$ up to $10^{-6}$. These orders are interior lattice points, so the three components are bounded, and they are different because their orders are different. The four vertices of $\Delta$ are the orders of four unbounded components. Since the order map is injective with values in the $7$ lattice points of $\Delta$ \cite{FPT}, there are no other components, and we conclude that $\R^2\setminus\mathcal A_C$ has exactly seven components, among which exactly three are bounded; this is the number of interior lattice points of $\Delta$.

The computation of the amoeba confirms this and shows the shape of the components. For each $u$ the points of $C$ with $\log|x|=u$ are given by the two roots $y_1,y_2$ of the quadratic above with $x=e^{u+i\varphi}$, and the section of $\mathcal A_C$ at abscissa $u$ is the union of the two intervals described by $\min(\log|y_1|,\log|y_2|)$ and $\max(\log|y_1|,\log|y_2|)$ when $\varphi$ runs over the circle, since these two functions are continuous in $\varphi$. Because the amoeba has long almost vertical tentacles and strips, we also computed the sections at fixed ordinate $v$: for $y=e^{v+i\varphi}$ the points of $C$ are given by the four roots in $x$ of the quartic $p_4y\,x^4+p_3y\,x^3+(p_2y+y^2)\,x^2+(p_1y+1)\,x+p_0y$, and the four roots ordered by modulus are continuous functions of $\varphi$, so each of the four intervals described by the $k$-th smallest value of $\log|x|$ lies in the section; we took the union of all these sections. The grid has $2400\times2400$ pixels in the window $[-12,12]^2$, with $5000$ values of $\varphi$ for the sections in $u$ and $3000$ for those in $v$, and every interval was given at least one pixel. Labelling the connected components of the complement of the drawn set gave seven components, four of which reach the border of the window and three of which are bounded. The three bounded components are contained in the boxes $[-4.15,-1.42]\times[-2.52,5.60]$ (order $(1,1)$), $[-1.38,1.37]\times[-4.25,3.96]$ (order $(2,1)$) and $[1.43,4.16]\times[-9.24,5.60]$ (order $(3,1)$), and their areas on the grid are $17.9$, $18.1$ and $33.2$.

The holes are separated from each other and from the unbounded components by four almost vertical strips of the amoeba, located near $u=-4.17$, $u=-1.39$, $u=1.39$ and $u=4.17$, and these strips are so thin in places that in a figure two neighbouring complement components seem to touch. We therefore measured their width directly with the sections at fixed $v$: for each $v$ the section consists of four disjoint intervals of $u$, one for each strip, and the width of the strip is the length of the corresponding interval. Figure~\ref{fig:neck} gives these widths as functions of $v$ on a logarithmic scale and Table~\ref{tab:neck} their minima. The minima are $0.024$, $0.0091$, $0.0263$ and $4.5\cdot10^{-4}$; they were computed with $40000$ values of $\varphi$ and confirmed with $6000$ values and with a second root finder, which gives the same four numbers to three digits. They are all positive, so the five components that the strips separate are indeed pairwise disjoint, in agreement with the count of orders above; but the smallest one, which separates the hole of order $(3,1)$ from the unbounded component on the right, is smaller than the pixel of any figure, and the reader should keep in mind that a figure can only show it as a line.

\begin{table}[ht]
\centering
\small
\renewcommand{\arraystretch}{1.25}
\begin{tabular}{@{}lp{6.2cm}cc@{}}
\hline
strip, centred at $u\approx$ & it separates & $v$ of the minimum & minimal width \\
\hline
$-4.1745$ & unbounded component on the left from the hole $(1,1)$ & $2.10$ & $0.0240$ \\
$-1.3945$ & hole $(1,1)$ from hole $(2,1)$ & $0.70$ & $0.0091$ \\
$1.3945$ & hole $(2,1)$ from hole $(3,1)$ & $-0.70$ & $0.0263$ \\
$4.1746$ & hole $(3,1)$ from the unbounded component on the right & $-2.10$ & $4.5\cdot10^{-4}$ \\
\hline
\end{tabular}
\caption{The four strips of the amoeba between the complement components, with the minimal width of each of them in the direction of the $u$-axis.}
\label{tab:neck}
\end{table}

\begin{figure}[ht]
\centering
\includegraphics[width=0.45\textwidth]{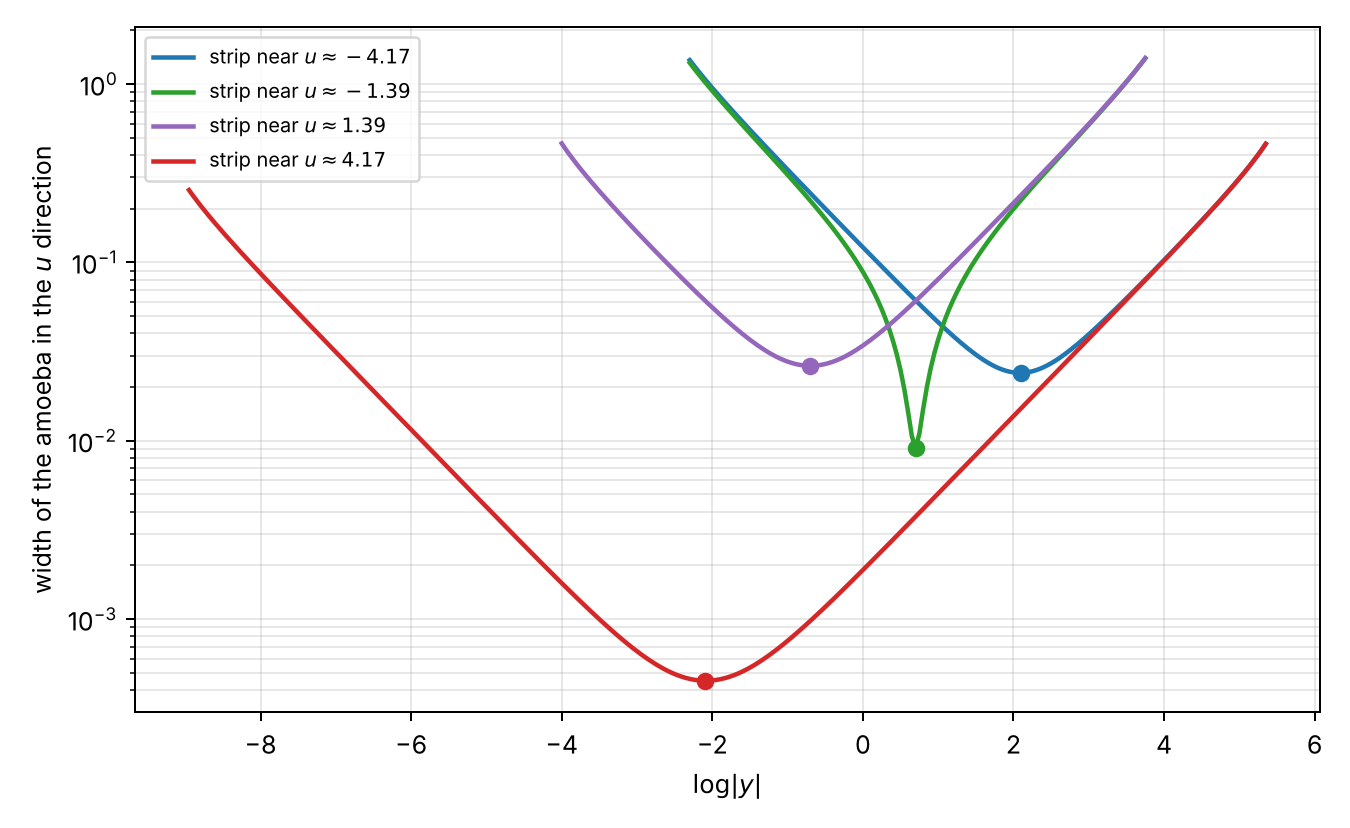}
\caption{Width of each of the four strips of the amoeba, in the direction of the $u$-axis, as a function of $v=\log|y|$ (logarithmic scale); each curve is drawn over the range of $v$ in which the strip lies between the two components that it separates, and the dots mark the minima of Table~\ref{tab:neck}.}
\label{fig:neck}
\end{figure}

\subsection{The contour and its coincidence with the boundary}\label{sec:contour}

For a slope $\psi\in(0,\pi)$ write $s_1=\cos\psi$ and $s_2=\sin\psi$. A point of $C$ lies over a real value of the logarithmic Gauss map exactly when $s_1\,xG_x=s_2\,yG_y$ for some such $\psi$. A direct computation gives
$s_1xG_x-s_2yG_y=2(s_1-s_2)\,x^2y^2+(s_1\,xP'(x)-s_2P(x))\,y+s_1x,$
and subtracting $2(s_1-s_2)\,G$, which vanishes on $C$, we obtain the expression $a_\psi(x)+b_\psi(x)\,y$ with
$a_\psi(x)=(2s_2-s_1)\,x,\qquad b_\psi(x)=s_1\,xP'(x)+(s_2-2s_1)\,P(x).$
Hence the critical points above the slope $\psi$ are the points of $C$ with $b_\psi(x)\,y=-a_\psi(x)$. When $b_\psi(x)\ne0$ we have $y=-a_\psi(x)/b_\psi(x)$, and substituting in $G=0$ and multiplying by $b_\psi^2$ we get $x\,b_\psi^2-P\,a_\psi b_\psi+x^2a_\psi^2=0$, which is $x$ times the polynomial
$R_\psi(x)=b_\psi(x)^2-(2s_2-s_1)\,P(x)\,b_\psi(x)+(2s_2-s_1)^2\,x^3$
of degree $8$ in the single variable $x$. Its leading coefficient is $p_4^2\,(2s_1+s_2)(3s_1-s_2)$ and its constant term is $p_0^2\,(2s_1-s_2)(s_1+s_2)$, so that for a slope outside the four values $\arctan3\approx1.2490$, $\pi-\arctan2\approx2.0344$, $\arctan2\approx1.1071$ and $3\pi/4\approx2.3562$ the polynomial $R_\psi$ has eight nonzero roots, which is the degree $2\Area(\Delta)=8$ of the logarithmic Gauss map \cite{Mikhalkin2000}. The four exceptional slopes are the ones for which a root goes to $0$ or $\infty$ along a tentacle: they are the directions of the sides $(3,1)$, $(-2,1)$, $(-2,-1)$ and $(1,-1)$, in the sense that the tangent $(\cos\psi,-\sin\psi)$ of the contour is the direction of the corresponding tentacle. A fifth slope, $\psi_0=\arctan(1/2)\approx0.4636$, is special for another reason: there $a_\psi\equiv0$, the polynomial $R_{\psi_0}=b_{\psi_0}^2$ has four double roots, and each root $x_0$ of $b_{\psi_0}$ gives two critical points, with the two values of $y$ for which $G(x_0,y)=0$.

We computed these critical points for $120000$ equally spaced values of $\psi$ in $[10^{-4},\pi-10^{-4}]$: we took the eight roots of $R_\psi$ with a numerical root finder, set $y=-a_\psi/b_\psi$, and refined each pair $(\log x,\log y)$ with eight iterations of Newton's method on the two equations $G=0$ and $s_1xG_x=s_2yG_y$ written in logarithmic coordinates, which also repairs the loss of accuracy near $\psi_0$ and near the exceptional slopes. The eight points were followed from one value of $\psi$ to the next by an optimal matching between the predicted and the computed points, with distances measured in the complex logarithmic coordinates; the prediction by extrapolation is necessary because near $\psi=\pi/2$ the two arcs that run along the thinnest strip move in opposite directions with velocity about $4200$ in $v$ per unit of $\psi$ while being only $4.5\cdot10^{-4}$ apart in $u$. This produces eight continuous curves, whose images under $\Log$ form the contour. They are drawn in crimson in Figure~\ref{fig:contour}, alone, and in Figure~\ref{fig:combined} on top of the amoeba.  

\begin{figure}[ht]
\centering
\includegraphics[width=0.23\textwidth]{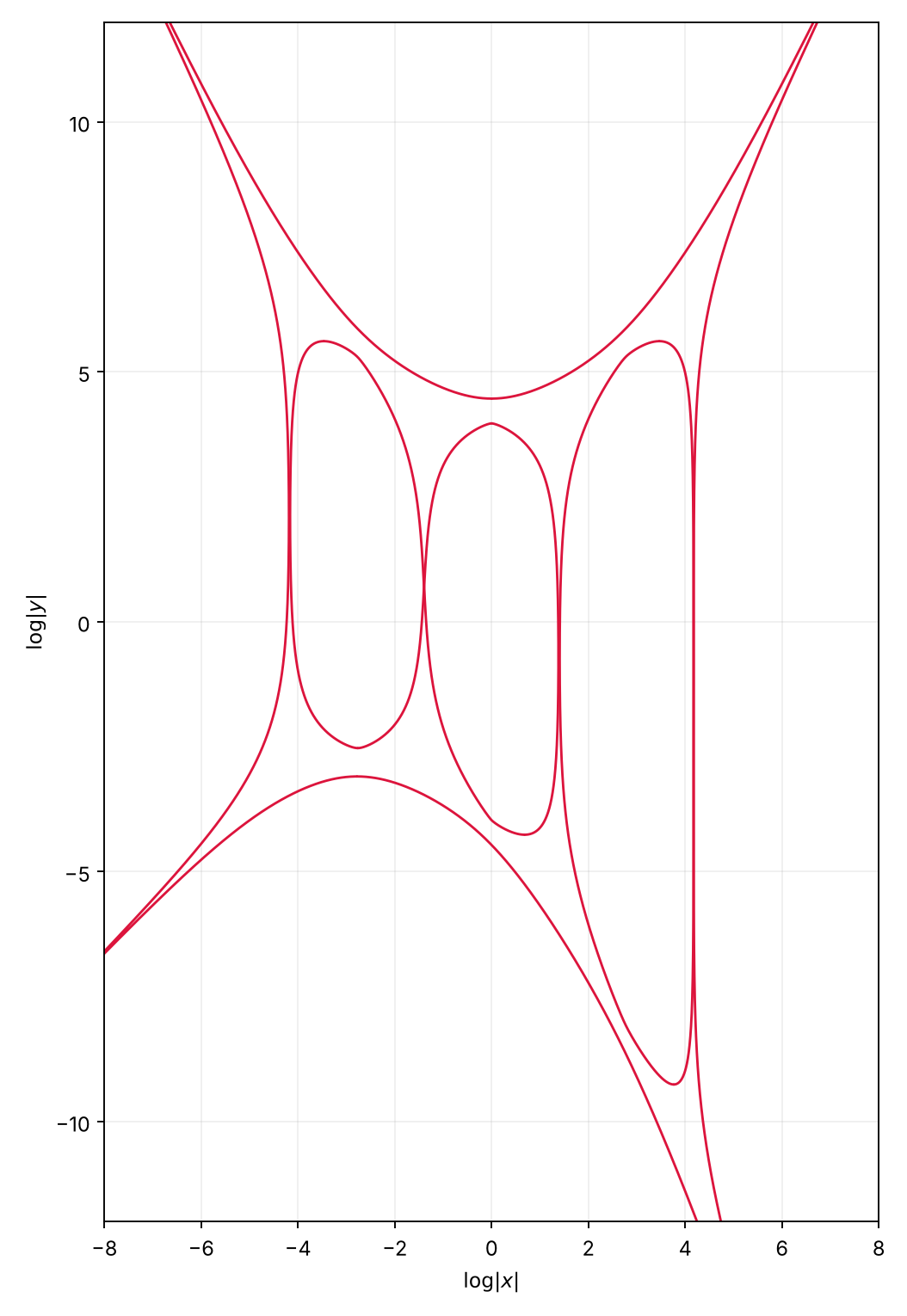}\quad   \includegraphics[width=0.23\textwidth]{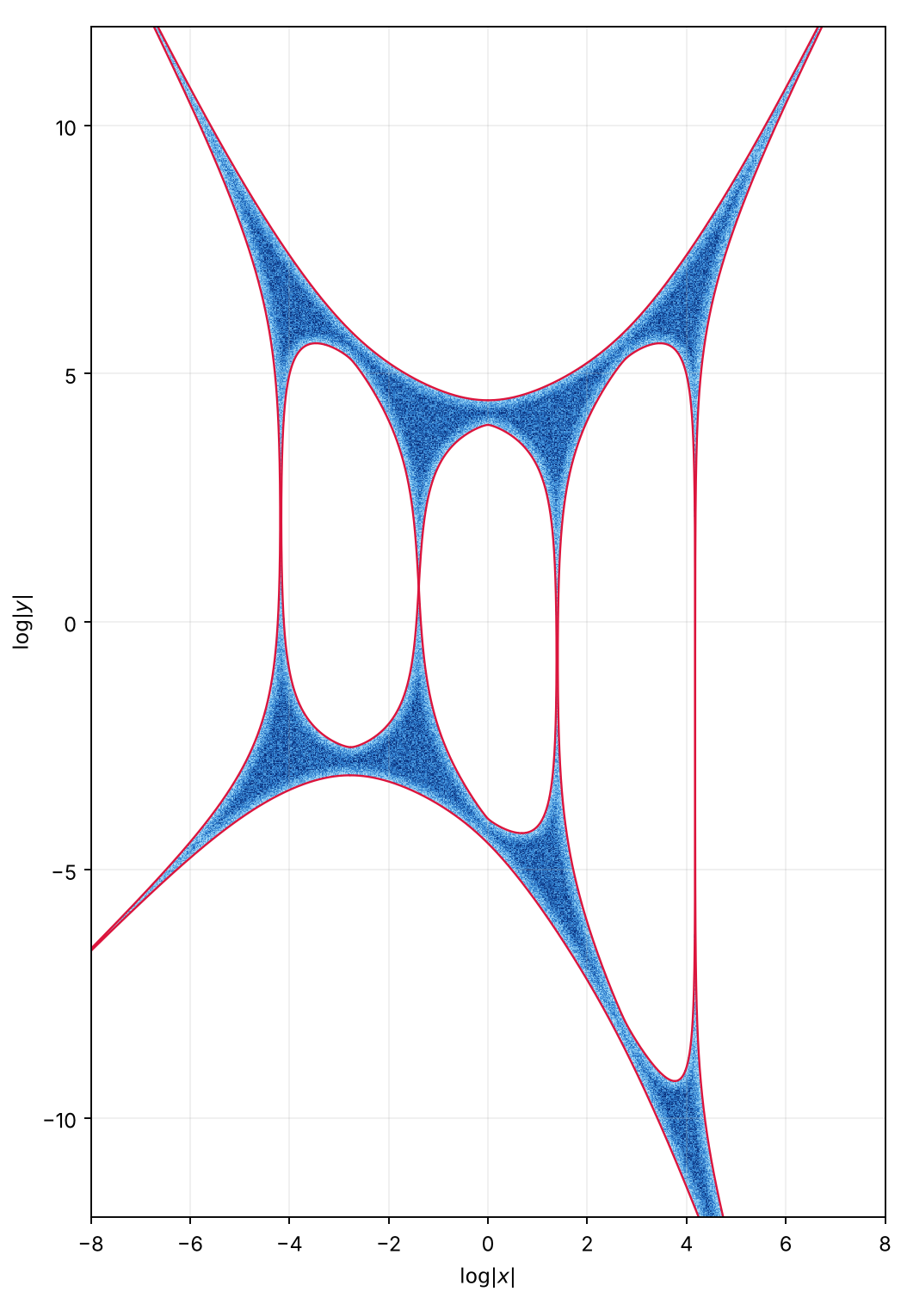} \quad \includegraphics[width=0.35\textwidth]{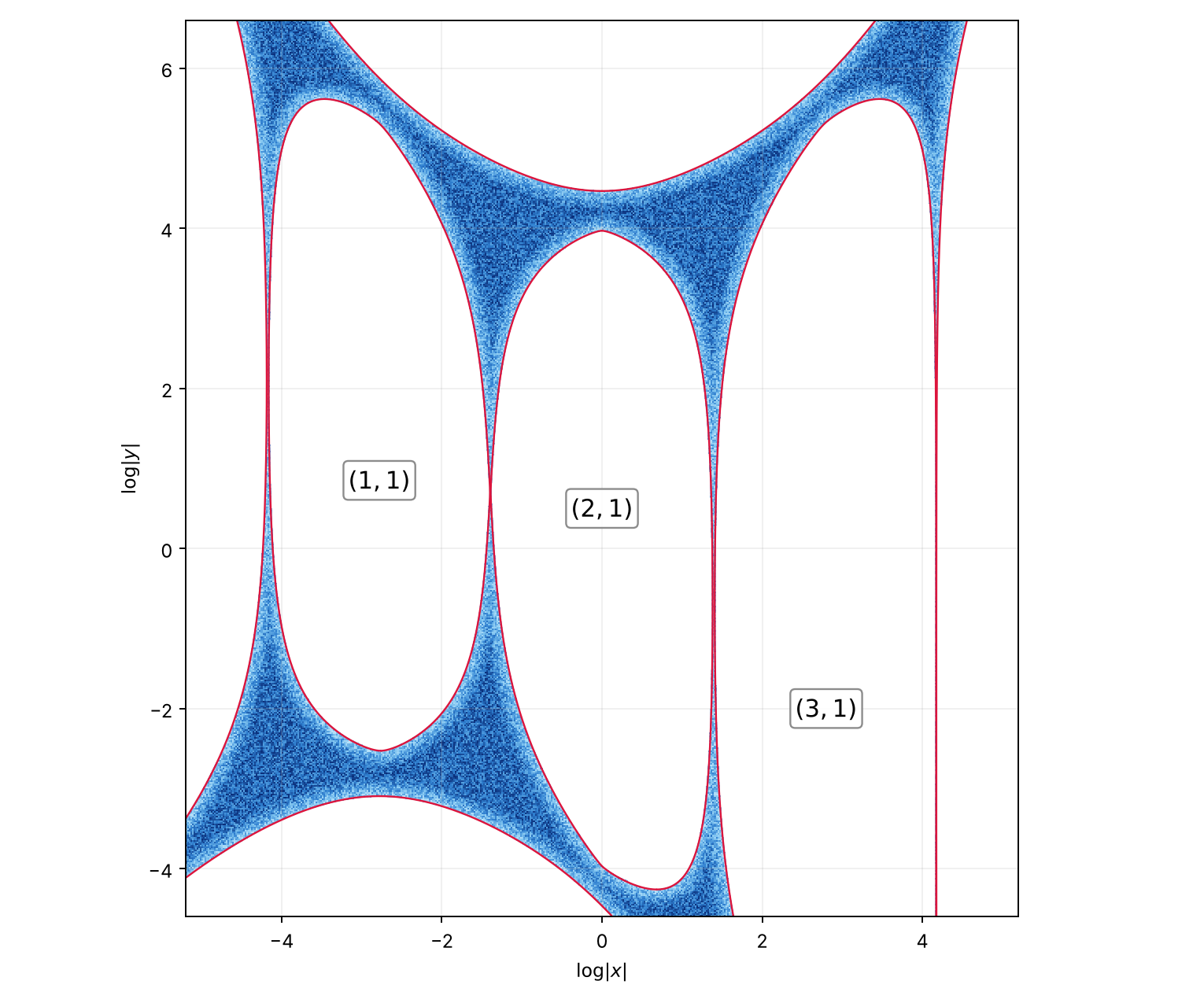}
\caption{The contour of the amoeba of $C$ in the window $[-8,8]\times[-12,12]$, drawn in crimson alone. It consists of eight smooth arcs; the boundary of each hole is a closed curve made of arcs of the contour, and the outer arcs run off along the four tentacles. In the middle, The amoeba of $C$ (speckled blue) with its contour (crimson) in the window $[-8,8]\times[-12,12]$. In the left,  Enlargement around the three holes, in the window $[-5.2,5.2]\times[-4.6,6.6]$; the number in each hole is its order.}
\label{fig:contour}
\end{figure}

We then tested numerically that the contour has no part inside the amoeba and is smooth. For each of the $950835$ points of the eight curves lying in the window $[-11.7,11.7]^2$ we computed its distance to the complement of the amoeba using the pixel image of Section~4; the largest value is $0.02$, that is two pixels, so every contour point lies on the boundary at the resolution of the picture. Conversely, rasterizing the polygonal lines through consecutive points of the eight curves, every boundary pixel of the amoeba in the window is within $0.01$, that is one pixel, of the contour. The inclusion of the boundary in the contour is a general fact, so the two sets agree in the window up to the resolution. The eight curves have no cusp: the largest turning angle between consecutive segments of the polygonal approximations inside the window is $2.6\cdot10^{-5}$ radians, and the speed $|d\,\Log/d\psi|$ stays above $0.089$ everywhere in the window. Moreover the eight critical points above a slope $\psi$ are pairwise distinct points of $C$ at every tenth of the slopes we sampled, with relative distance at least $0.62$, so no real value of $\gamma$ is a critical value and $\gamma^{-1}(\mathbb{RP}^1)$ is a smooth curve; this excludes the junction points that occur in other examples at real ramification points of $\gamma$. Where the arcs leave the window they run along the two sides of each of the four tentacles, so that each tentacle is bounded by two arcs and there are eight ends in total; at the four exceptional slopes a root goes to $0$ or $\infty$ and the arcs run out to infinity, with no cusp involved.

The coincidence of contour and boundary is not automatic for a curve with a given Newton polygon and three holes. With essentially the same phases of the coefficients but with $|p_0|=|p_4|=1$ instead of $1/4$ (and $|p_1|=|p_3|=16$, $|p_2|=64$), the same method gives a smooth contour that has pieces at distance up to $0.37$ inside the amoeba, so the property is tied to the strongly unbalanced size of the coefficients that we chose; for a simple Harnack curve it holds for all coefficients. 

\subsection{Why $C$ is not toric-equivalent to a simple Harnack curve}\label{sec:harnack}

Let $c_\alpha$ denote the coefficient of $x^{\alpha_1}y^{\alpha_2}$ in $G$, so that $c_{(1,0)}=c_{(2,2)}=1$, $c_{(0,1)}=1/4$, $c_{(1,1)}=16i$, $c_{(2,1)}=11-63i$, $c_{(3,1)}=16$ and $c_{(4,1)}=-1/4$. The exponents $(1,1)$, $(2,1)$ and $(3,1)$ satisfy the affine relation $(1,1)+(3,1)-2\,(2,1)=0$, and the exponents $(1,0)$, $(2,2)$, $(1,1)$ and $(2,1)$ satisfy $(1,0)+(2,2)-(1,1)-(2,1)=0$. The corresponding products of coefficients are
$I_1=c_{(1,1)}c_{(3,1)}/c_{(2,1)}^{2}=256\,i/(11-63i)^2=-0.02121-0.05889\,i$ and $I_2=c_{(1,0)}c_{(2,2)}/(c_{(1,1)}c_{(2,1)})=1/(16i\,(11-63i))=0.00096-0.00017\,i$.
Both do not change when $G$ is multiplied by a constant $\kappa$ or by a monomial, nor when $x$ and $y$ are replaced by $\lambda x$ and $\mu y$: the scalar $\kappa$ occurs to the same total power in the numerator and in the denominator, and the factors $\lambda^{\alpha_1}\mu^{\alpha_2}$ cancel exactly because the corresponding exponents satisfy the affine relations above. A change of coordinates with matrix in $GL_2(\Z)$ transports the support of $G$ onto the support of the new polynomial and the affine relations onto affine relations, so such products of coefficients computed for the transported relations are unchanged as well.

Suppose now that $\Phi$ is a monomial change of coordinates carrying $C$ onto a simple Harnack curve $C'$. By definition \cite{Mikhalkin2000} the curve $C'$ is real, that is, defined by a polynomial $g$ with real coefficients. The Laurent polynomial $g\circ\Phi$ vanishes exactly on $C$, which is irreducible by Section~3, so that $g\circ\Phi$ is $\kappa$ times a monomial times $G$; in particular $g$ has seven terms, its support corresponds to the support of $G$, and the corresponding quantities $I_1$ and $I_2$ computed from the coefficients of $g$ are real and equal to the ones computed from $G$. But $I_1=-0.02121-0.05889\,i$ is not real, a contradiction. Hence $C$ is not toric-equivalent to a simple Harnack curve, and the argument covers equivalences with arbitrary complex translations of the torus.

The area gives an independent confirmation. The area of $\mathcal A_C$, computed by integrating over $u$ the total length of the section described in Section~4, is $32.329$ for $u\in[-20,20]$ with $6000$ values of $u$, and $32.349$ for $u\in[-40,40]$ with $8000$ values, so about $32.35$ up to an error that we estimate to be below $0.05$. The maximal value for this polygon is $\pi^2\Area(\Delta)=4\pi^2\approx39.478$ by \cite{PR04}, so $\mathcal A_C$ fills about $82\%$ of it. By \cite{MR2001} a curve with this Newton polygon and this amoeba cannot be Harnack, and since both areas are preserved by toric equivalence it cannot be toric-equivalent to a Harnack curve either.


\end{document}